\documentclass[11pt]{article}

\usepackage{amsmath}
\usepackage{amssymb}
\usepackage{amsfonts}
\usepackage{amsthm}
\usepackage{mathtools}
\usepackage{relsize}

\usepackage{tikz}
\usepackage{tikz-cd}

\usepackage{enumitem}
\usepackage[margin=1in]{geometry}

\usepackage{xcolor}
\usepackage[colorlinks=true,linkcolor=blue,citecolor=blue]{hyperref}

\newtheorem{theorem}{Theorem}[section]
\newtheorem{lemma}[theorem]{Lemma}
\newtheorem{proposition}[theorem]{Proposition}
\newtheorem{corollary}[theorem]{Corollary}
\theoremstyle{definition}

\theoremstyle{remark}
\newtheorem{remark}[theorem]{Remark}

\numberwithin{equation}{section}

\title{Smooth manifolds homotopy equivalent to products of spheres}
\author{Sagnik Biswas\\[2pt]
  }
\date{\today}

\begin{document}
\maketitle

\begin{abstract}
We classify, up to almost diffeomorphism, the smooth closed oriented manifolds
homotopy equivalent to each of the sphere products $S^{4k-1}\times S^{4k}$,
$S^{4k}\times S^{4k}$, and $S^{4k}\times S^{4k+1}$.  In each case we realize the image of the normal-invariant map in the smooth surgery exact sequence by explicit families of manifolds: sphere bundles over $S^{4k}$;
pinch maps and Milnor plumbings of disk bundles; and Novikov sphere bundles
together with connected sums of homotopy spheres.
\end{abstract}

\section{Introduction}\label{sec:intro}

The study of smooth manifolds homotopy equivalent to a product of spheres
started with Novikov~\cite{novikov}. For $S^p\times S^q$ 
the smooth structure set  $\mathcal hS(S^p\times S^q)$, is investigated by Crowley,~\cite{crowley-sss} where he fits it into an exact sequence; the full structure set is evaluated in the lowest-dimensional case explicitly but in general no explicit geometric construction were given. The underlying diffeomorphism types are described in general for
$n-1$ connected $2n$ or $2n+1$ manifolds by Wall, Wilkens and
Su--Jiang~\cite{wall-vi,wilkens,jiang-su}, the latter provided a geometric description for $n=4k-1$, but without isolating to a given product's homotopy type. There are some results that works with a stronger equivalence. De Sapio,
Kawakubo and Schultz~\cite{desapio-prod,kawakubo,schultz} classify the manifolds
\emph{homeomorphic} to $S^p\times S^q$.

This paper gives a complete and explicit classification, up to almost
diffeomorphism, of the smooth closed oriented manifolds homotopy equivalent to
$S^{4k-1}\times S^{4k}$, $S^{4k}\times S^{4k}$ and $S^{4k}\times S^{4k+1}$. In each
case we determine the image of the normal-invariant map when needed, realise every normal invariant by an
explicit structure set element, and thus evaluating the
structure set and classifying the manifolds upto almost diffeomorphism. Concretely, for $k\ge1$ (and $k\ge2$ in the last case) every such manifold
is almost diffeomorphic to: a sphere bundle $X_{l(c)}=S(\xi_{l(c)})$ over $S^{4k}$
for a unique $c\ge0$ (Theorem~\ref{thm:main-result}); a plumbing-type manifold
$X_{(u,v)}$ indexed by the realisable pairs $\partial(u,v)=0$
(Theorem~\ref{thm:classification}); or $S(m\eta)$ for some
$m\in\mathbb Z$. (Theorem~\ref{thm:classification-odd}).

The three products are treated in turn: $S^{4k-1}\times S^{4k}$ by sphere bundles
and a fibre-homotopy-triviality criterion using the classification of Su-Jiang~\cite{jiang-su}; $S^{4k}\times S^{4k}$ by computing the
image of the normal-invariant map and realising each class through pinch maps and
Milnor plumbings; and $S^{4k}\times S^{4k+1}$ through the Pinch maps and Novikov sphere bundle.

\section{Smooth manifolds homotopy equivalent to $S^{4k-1}\times S^{4k}$}

Let's define,
\begin{equation}
a_k \;=\; \begin{cases} 1 & k\text{ even}\\ 2 & k\text{ odd}\end{cases},\qquad
b_{4k-1} \;=\; \begin{cases} 2 & 4k-1 \in \{3,7\}\\ 1 & \text{otherwise}\end{cases},\qquad
y_k \;:=\; a_k\cdot b_{4k-1}\cdot (2k-1)!.
\end{equation}

By Su--Jiang \cite{jiang-su}, for every $l \in y_k\,\mathbb{Z}$ there is a unique $(4k)$-dimensional real vector bundle $\xi_l$ over $S^{4k}$ with $e(\xi_l) = 0$ and $p_k(\xi_l) = l\cdot\omega$, where $\omega$ generates $H^{4k}(S^{4k};\mathbb{Z})$. Let $X_l := S(\xi_l)$ denote the total space of the associated $S^{4k-1}$-bundle. Note that $X_0 = S^{4k-1}\times S^{4k}$. Main theorem of this section is the following:

\begin{theorem}\label{thm:main-result}
Let $k \geq 1$, and set
\begin{equation}
l(c) \;:=\; c\cdot a_k\cdot (2k-1)!\cdot \operatorname{den}\!\bigl(B_k/(4k)\bigr), \qquad
t_{4k} \;:=\; a_k\cdot 2^{2k-2}(2^{2k-1}-1)\cdot \operatorname{Num}(B_k/(4k)).
\end{equation}
\begin{enumerate}
\item[\textnormal{(i)}] Every smooth, closed, oriented manifold $M$ homotopy equivalent to $S^{4k-1}\times S^{4k}$ is almost diffeomorphic to $X_{l(c)}$ for a unique $c \in \mathbb{Z}_{\geq 0}$.
\item[\textnormal{(ii)}] For each $c \in \mathbb{Z}$, the fibre-homotopy equivalence $f_{l(c)}\colon X_{l(c)} \xrightarrow{\simeq} S^{4k-1}\times S^{4k}$ (provided by Lemma~\ref{thm:main}) has normal invariant whose image in $[S^{4k}, G/\textnormal{Top}] = \mathbb{Z}$ is $\pm c\cdot t_{4k}$; the family $\{f_{l(c)}\}_{c \in \mathbb{Z}}$ realises every element of $t_{4k}\mathbb{Z}$.
\end{enumerate}

\end{theorem}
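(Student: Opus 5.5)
We work throughout with the smooth surgery exact sequence for $X_0=S^{4k-1}\times S^{4k}$, which has dimension $n=8k-1$. Since $n$ is odd and $\pi_1=0$, we have $L_{8k}(1)=\mathbb Z$ and $L_{8k-1}(1)=0$, so the sequence reads
\[
L_{8k}(1)\longrightarrow \mathcal S(X_0)\xrightarrow{\;\eta\;}[X_0,G/O]\longrightarrow 0 .
\]
The action of $L_{8k}(1)$ is by connected sum with homotopy spheres in $bP_{8k}$, and these do not change the almost-diffeomorphism type. Hence almost-diffeomorphism classes of manifolds homotopy equivalent to $X_0$ are controlled by $[X_0,G/O]$, taken modulo self-homotopy equivalences and modulo the $\Theta_{8k-1}$ part of the normal invariants. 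We compute $[X_0,G/O]$ from the stable splitting $\Sigma X_0\simeq \Sigma(S^{4k-1}\vee S^{4k}\vee S^{8k-1})$. The top cell only contributes connected sums with homotopy spheres, which we discard. The factor $\pi_{4k-1}(G/O)$ is finite, and we plan to show it is absorbed by self-homotopy equivalences of $X_0$ or killed by almost diffeomorphism; this requires a short check against the classification of Su--Jiang. The essential part is then the image in $[S^{4k},G/O]\to[S^{4k},G/\mathrm{Top}]=\mathbb Z$, and we identify this image with $t_{4k}\mathbb Z$. Concretely, $\pi_{4k}(G/O)\to\pi_{4k}(G/\mathrm{Top})$ has image of index $a_k 2^{2k-2}(2^{2k-1}-1)\operatorname{Num}(B_k/4k)$ (up to the $bP$ conventions), by the standard computation via $\pi_{4k}(BO)\to\pi_{4k}(BG)$ and the $J$-homomorphism.

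For (ii), we take the fibre homotopy trivialisation $f_{l}$ of $S(\xi_l)$ from Lemma~\ref{thm:main}; it exists exactly when $J(\xi_l)=0$, i.e.\ when the order of $\operatorname{im}J$ in degree $4k-1$, namely $\operatorname{den}(B_k/4k)$, divides $l/(a_k(2k-1)!)$, which is the form $l(c)$. The normal invariant of $f_l$ restricted to $S^{4k}$ is the composite $S^{4k}\to BO\to G/O$ (lifted through $J$-triviality), and its image in $G/\mathrm{Top}$ is computed by the surgery obstruction formula $(p_k(\nu)-\text{(}p_k\text{ of the target)})/8$ scaled appropriately, via the Hirzebruch $L$-class. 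Evaluating on the transfer of the $4k$-cell, where the splitting invariant along $S^{4k}\subset X_0$ is the simply connected signature obstruction of the transverse preimage, gives
\[
\frac{s_k\, p_k(\xi_{l(c)})}{8}\;=\;\pm c\cdot t_{4k},\qquad s_k=2^{2k}(2^{2k-1}-1)\frac{|B_k|}{(2k)!}\cdot(\text{normalisation}).
\]
Here $s_k$ is the $L$-polynomial coefficient of $p_k$. Substituting $l(c)=c\,a_k(2k-1)!\operatorname{den}(B_k/4k)$ and simplifying with $B_k/4k=\operatorname{Num}/\operatorname{den}$ yields exactly $c\,a_k2^{2k-2}(2^{2k-1}-1)\operatorname{Num}(B_k/4k)$. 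As $c$ ranges over $\mathbb Z$, this realises all of $t_{4k}\mathbb Z$.

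For (i), let $M\simeq X_0$. Its normal invariant on the $4k$-cell lies in $t_{4k}\mathbb Z$, say $c\,t_{4k}$, so it agrees with that of some $f_{l(c)}$ modulo the discarded parts. By exactness, $M$ is then almost diffeomorphic to $X_{l(c)}$. Precomposing with the orientation-reversing self-equivalence (reflection on $S^{4k}$) changes $c$ to $-c$ and sends $\xi_l$ to $\xi_{-l}$, with $X_{l}\cong X_{-l}$; this lets us take $c\ge0$. Uniqueness holds because $|p_k(M)|=l(c)\omega$ is a homotopy-plus-smooth invariant up to sign, and it is unchanged by connected sum with homotopy spheres. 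The main obstacle is the first step, controlling the $\pi_{4k-1}(G/O)$ summand and the self-equivalences so that no extra almost-diffeomorphism classes appear. There we would rely on Su--Jiang's classification of $(n-1)$-connected $(2n+1)$-manifolds (invariants: $p_k$ together with a linking/torsion datum, which is trivial here), which directly gives that manifolds with equal $p_k$ are almost diffeomorphic.
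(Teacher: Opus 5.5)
Part (ii) of your proposal is essentially the paper's argument. The paper identifies $\eta(f_{l(c)})=\pi_X^*\gamma_c$ via Crowley's Lemma~5.2 and applies the signature theorem to the degree-one normal map $W\to S^{4k}$ covered by $\xi_{l(c)}$. You instead take the transverse preimage of $\{*\}\times S^{4k}$. Both reduce to $\frac18(\text{coefficient of }p_k\text{ in }L_k)\cdot l(c)=c\,t_{4k}$.

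For (i) your route is different, and it is correct once you rely on Su--Jiang as you do at the end. The paper first invokes Su--Jiang to get $M$ almost diffeomorphic to some $X_l$ with $l\in y_k\mathbb{Z}$. It then uses the necessity half of Lemma~\ref{thm:main} (Adams' computation of $\ker J_s$) to force $l\in l(1)\mathbb{Z}$.

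You get the divisibility from an arbitrary homotopy equivalence $f\colon M\to X_0$. The $S^{4k}$-component of $\eta(f)$ maps into $t_{4k}\mathbb{Z}\subset\pi_{4k}(G/\mathrm{Top})$. Running the Hirzebruch computation of (ii) backwards then gives $p_k(M)=\pm l(c)$. Su--Jiang, which says the almost-diffeomorphism class is determined by $p_k$ in this family, identifies $M$ with $X_{l(c)}$.

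This buys you something. The paper's appeal to Lemma~\ref{thm:main} tacitly needs to pass from $X_l\simeq X_0$ to (stable) fibre-homotopy triviality of $\xi_l$, and it leaves that step implicit. Your normal-invariant argument works directly for any homotopy equivalence and needs no such step.

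However, the sentence ``it agrees with that of some $f_{l(c)}$ modulo the discarded parts; by exactness, $M$ is almost diffeomorphic to $X_{l(c)}$'' is not a valid inference on its own. Exactness only relates structures whose normal invariants in $[X_0,G/O]$ are equal. You have not shown that the two discarded summands are realised by self-homotopy equivalences of $X_0$, for instance by pinch maps as in \S3. These summands are the $\pi_{4k-1}(G/O)$ summand and the torsion subgroup $\ker\bigl(\pi_{4k}(G/O)\to\pi_{4k}(G/\mathrm{Top})\bigr)$.

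Since you rest on Su--Jiang exactly as the paper does, drop the exactness step and state (i) directly. Since $p_k(M)=\pm l(c)=\pm p_k(X_{l(c)})$, Su--Jiang gives that $M$ is almost diffeomorphic to $X_{l(c)}$. Uniqueness of $c\geq 0$ follows from $|p_k|$ together with your reflection argument $X_l\cong X_{-l}$.
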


\begin{proof}[Proof of 2.1(i)]
We prove this from the fibre-homotopy equivalence criterion (Lemma~\ref{thm:main}), Su--Jiang's almost-diffeomorphism classification, and a Pontryagin-class uniqueness argument

\begin{lemma}[Fibre-homotopy equivalence criterion]\label{thm:main}
Let $k \geq 1$ and $l \in y_k\,\mathbb{Z}$. The sphere bundle $\xi_l$ is fibre-homotopy trivial if and only if
\begin{equation}\label{eq:divisibility}
l \;\in\; a_k\cdot (2k-1)!\cdot \operatorname{den}\!\bigl(B_k/(4k)\bigr)\cdot \mathbb{Z}.
\end{equation}
\end{lemma}

\noindent When \eqref{eq:divisibility} holds, we write $f_l \colon X_l \xrightarrow{\simeq} S^{4k-1}\times S^{4k}$ for the resulting fibre-homotopy equivalence.

\begin{proof}[Proof of necessity]
Assume $X_l \simeq_{\textnormal{fhe}} S^{4k-1}\times S^{4k}$. Then $\xi_l$ has trivial associated spherical fibration, so the clutching map $\xi_l \in \pi_{4k-1}(SO(4k))$ lies in the kernel of the map $A_*\colon \pi_{4k-1}(SO(4k)) \to \pi_{4k-1}(SG(4k))$.

Let $\alpha_l \in \pi_{4k-1}(SO) = \mathbb{Z}$ denote the stable class of $\xi_l$. Since stabilisation preserves Pontryagin classes, $p_k(\alpha_l) = l$. The map $p_k\colon \pi_{4k-1}(SO) = \mathbb{Z} \to \mathbb{Z}$ is injective with image $a_k\cdot (2k-1)!\cdot \mathbb{Z}$ \cite[Lem.~1.1(ii)]{kervaire-note}. By naturality of $J$ under stabilisation, $A_*(\xi_l) = 0$ implies $J_s(\alpha_l) = 0$ in $\pi_{4k-1}^s$. The kernel of $J_s\colon \pi_{4k-1}(SO) \to \pi_{4k-1}^s$ is the subgroup $\operatorname{den}(B_k/(4k))\cdot \mathbb{Z}$, by \cite{adams-jiv}.

Since $\alpha_l \in \ker J_s$, we have $\alpha_l = n\cdot \operatorname{den}(B_k/(4k))\cdot g$ for some $n \in \mathbb{Z}$, where $g$ generates $\pi_{4k-1}(SO)$. Applying $p_k$:
\begin{equation}
l \;=\; p_k(\alpha_l) \;=\; n \cdot \operatorname{den}(B_k/(4k)) \cdot a_k(2k-1)!.
\end{equation}
Hence $l \in a_k(2k-1)!\operatorname{den}(B_k/(4k))\mathbb{Z}$. This proves the necesity part. \phantom\qedhere
\end{proof}

\begin{proof}[Proof of sufficiency]
Assume $l \in a_k(2k-1)!\operatorname{den}(B_k/(4k))\mathbb{Z}$. We show $A_*(\xi_l) = 0$ in $\pi_{4k-1}(SG(4k))$. Since $e(\xi_l) = 0$, we have $\xi_l = \xi'_l \oplus \epsilon$ uniquely for some $\xi'_l \in \pi_{4k-1}(SO(4k-1))$. Writing $J^u\colon \pi_{4k-1}(SO(4k-1)) \to \pi_{4k-1}(SF(4k-1)) \cong \pi_{8k-2}(S^{4k-1})$ for the unstable $J$-homomorphism and $\psi\colon SF(4k-1) \to SG(4k)$ for the natural inclusion, commutativity gives $A_*(\xi_l) = \psi_*(J^u(\xi'_l))$.

\noindent Set $\beta := J^u(\xi'_l) \in \pi_{8k-2}(S^{4k-1})$. By commutativity of the stabilization, $\beta$ stabilises to $J_s(\alpha_l) = 0$ in $\pi_{4k-1}^s$. Stabilisation factors as
\begin{equation}
\pi_{8k-2}(S^{4k-1}) \xrightarrow{E_1} \pi_{8k-1}(S^{4k}) \xrightarrow{E_2} \pi_{8k}(S^{4k+1}) \cong\ \pi_{4k-1}^s.
\end{equation}
The EHP sequence \cite{hilton-whitehead} gives $\ker E_2 = \mathbb{Z}\cdot[\iota_{4k}, \iota_{4k}]$ and $\ker E_1 = P_1(\pi_1^s) \le \mathbb{Z}/2$. Since $E_2 E_1(\beta) = 0$, we have $E_1(\beta) \in \ker E_2 = \mathbb{Z}\cdot[\iota,\iota]$. The Hopf invariant satisfies $H(E_1\beta) = e(\xi_l) = 0$, while $H[\iota,\iota] = \pm 2 \neq 0$, so
\begin{equation}
E_1(\beta) \;\in\; \mathbb{Z}\cdot[\iota,\iota]\,\cap\,\ker H \;=\; 0,
\end{equation}
and hence $\beta \in \ker E_1$.

The fibration fibration $SF(4k-1) \to SG(4k) \xrightarrow{\textnormal{ev}} S^{4k-1}$ gives $\ker \psi_* = \operatorname{Im}(\partial)$ where $\partial(\alpha) = [\iota_{4k-1}, \alpha]$, so $\operatorname{Im}(\partial) = \mathbb{Z}/2\cdot[\iota_{4k-1}, \eta_{4k-1}]$. By the James--Whitehead formula \cite{hilton-whitehead}, $\ker E_1 = P_1(\pi_1^s) = \mathbb{Z}/2\cdot[\iota_{4k-1}, \eta_{4k-1}]$, so $\ker E_1 \subseteq \ker \psi_*$. Therefore $\psi_*(\beta) = 0$, i.e.\ $A_*(\xi_l) = 0$. This proves the sufficiency. \phantom\qedhere
\end{proof}

\begin{proof}[Proof of uniqueness]
By Su--Jiang \cite{jiang-su}, $M$ is almost diffeomorphic to some $X_l$ with $l \in y_k\mathbb{Z}$; Lemma~\ref{thm:main} then forces $l \in a_k(2k-1)!\operatorname{den}(B_k/(4k))\mathbb{Z}$, so $l = l(c)$ for some $c \in \mathbb{Z}_{\geq 0}$ (absorbing sign by orientation). It remains to prove uniqueness. The sphere-bundle tangent formula gives $\tau_{X_{l(c)}} = \pi_c^*\xi_{l(c)}$ stably, hence $p_k(X_{l(c)}) = l(c)\cdot\pi_c^*\omega = l(c)\in Z$ (Gysin, using $e(\xi_{l(c)}) = 0$). Now, if $X_{l(c)}$ is almost diffeomorphic to $X_{l(c')}$, then , $p_k(X_{l(c)})=p_k(X_{l(c')})$, impliying, $l(c)=l(c')$ and hence, $c=c'$. This proves the uniqueness and also the Lemma. \phantom\qedhere
\end{proof}
\end{proof}

\begin{proof}[Proof of Theorem~\ref{thm:main-result}(ii)]
Since $X_{l(c)} = S(\xi_{l(c)})$ and $f_{l(c)}\colon X_{l(c)} \xrightarrow{\simeq} S^{4k-1}\times S^{4k}$ is a fibre-homotopy equivalence over $S^{4k}$, it corresponds to a pair $(\xi_{l(c)}, f_{l(c)}) \in [S^{4k}, G/O]$, represented by the degree-1 normal map $\gamma_c \in N^O(S^{4k})$:
\begin{equation}
\begin{array}{ccc}
\nu_ W & \longrightarrow & \xi_{l(c)} \\
\big\downarrow & & \big\downarrow \\
W & \xrightarrow{\;f\;} & S^{4k},
\end{array}
\end{equation}
 By Crowley's Lemma~5.2 \cite[Lem.~5.2]{crowley-escher}, the normal invariant of $f_{l(c)}$ then satisfies
\begin{equation}
\eta_O(f_{l(c)}) \;=\; \pi_X^*\,\gamma_c \quad \text{in } [X, G/O],
\end{equation}
where $\pi_X\colon X = S^{4k-1}\times S^{4k} \to S^{4k}$ is the second-factor projection.

The image of $\eta_O(f_{l(c)})$ in $[S^{4k}, G/\textnormal{Top}] = \mathbb{Z}$ is (up to sign) the surgery obstruction $\sigma(\gamma_c)$. By Hirzebruch's signature theorem~\cite{hirzebruch},
\begin{equation}
8\,\sigma(\gamma_c) \;=\; \operatorname{sgn}(W) \;=\; L_k(p_1^W, \ldots, p_k^W)[W].
\end{equation}
By naturality of Pontryagin classes, $p_i^W = f^*p_i(\xi_{l(c)}) = 0$ for $i < k$ and $p_k^W = f^*p_k(\xi_{l(c)}) = l(c)\cdot \omega \in H^{4k}(S^{4k})$ (using $\deg f = 1$). Only the coefficient of $p_k$ in $L_k$ contributes:
\begin{equation}
8\,\sigma(\gamma_c) \;=\; \operatorname{(coeff}p_k \text{ in } L_k)\cdot p_k^W[W] \;=\; \frac{2^{2k}(2^{2k-1}-1)B_k}{(2k)!}\cdot l(c).
\end{equation}
Substituting $l(c) = c\cdot a_k(2k-1)!\operatorname{den}(B_k/(4k))$, using $(2k)! = 2k\cdot(2k-1)!$, and applying \\ $B_k\cdot \operatorname{den}(B_k/(4k)) = 4k\cdot \operatorname{Num}(B_k/(4k))$:
\begin{equation}
\sigma(\gamma_c) \;=\; c\cdot a_k\cdot 2^{2k-2}(2^{2k-1}-1)\cdot \operatorname{Num}(B_k/(4k)) \;=\; c\cdot t_{4k}.
\end{equation}

The image of $\rho\colon [S^{4k}, G/O] \to [S^{4k}, G/\textnormal{Top}] = \mathbb{Z}$ is exactly $t_{4k}\mathbb{Z}$ \cite[Prop.~13.23]{ranicki-ags}, and the family $\{[X_{(l(c))},f_{(l(c))} ] : c \in \mathbb{Z}\} \subseteq hS(S^{4k-1} \times S^{4k}) $ exhausts this out bijectively.
\end{proof}

\section{Smooth manifolds homotopy equivalent to $S^{4k}\times S^{4k}$}

We classify smooth closed oriented $8k$-manifolds homotopy equivalent to $S^{4k}\times S^{4k}$. Note that we have a surgery exact sequence $0 \to hS(S^{4k} \times S^{4k}) \xrightarrow{\eta} [S^{4k} \times S^{4k},G/O] \xrightarrow{\sigma} L_{8k}(e)$. First we would try to figure out the $Im(\eta)$. Then we would try to find inverses of all elements in $Im(\eta)$. The inverses of elements in the $Im([S^{4k} \times S^{4k},SG]) \cap Im(\eta)$ will be pinch maps. And recall from the preceding section, we had the family of rank-$4k$ oriented vector bundles $\xi_{l(c)} \to S^{4k}$ with $l(c) = c \cdot a_k(2k-1)!\operatorname{den}(B_k/4k)$. The elements of $Im(\eta) \notin Im([S^{4k} \times S^{4k},SG])$ has inverses coming from a coned off plumbing of two $D(\xi_{l(c)})$. Main result of this section is the following:

\begin{theorem}[Classification]\label{thm:classification}
Every smooth closed oriented $8k$-manifold homotopy equivalent to $S^{4k}\times S^{4k}$ is almost diffeomorphic to some $X_{(u,v)}$ for pairs $(u,v) \in [S^{4k}, G/O] \times [S^{4k}, G/O]$ satisfying  $\partial(u,v) = 0$.  
\end{theorem}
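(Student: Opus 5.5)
The plan is to run the smooth surgery exact sequence
\[
0\to hS(S^{4k}\times S^{4k})\xrightarrow{\eta}[S^{4k}\times S^{4k},G/O]\xrightarrow{\sigma}L_{8k}(e)=\mathbb{Z},
\]
which is valid here because $L_{8k+1}(e)=0$. We then identify $\operatorname{Im}\eta=\ker\sigma$ and realise each element by an explicit manifold $X_{(u,v)}$. This gives a surjection onto the structure set. After that we pass to almost diffeomorphism: removing a point and regluing kills the $\Theta_{8k}$-action, so only the normal invariant modulo that action matters.

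\textbf{Step 1: compute $[S^{4k}\times S^{4k},G/O]$.}
The cofibration $S^{4k}\vee S^{4k}\to S^{4k}\times S^{4k}\to S^{8k}$ gives, since $G/O$ is an infinite loop space, a split short exact sequence
\[
0\to\pi_{8k}(G/O)\to[S^{4k}\times S^{4k},G/O]\to\pi_{4k}(G/O)^2\to 0.
\]
So a normal invariant is a triple $(u,v,w)$. Here $(u,v)$ are the restrictions to the two axes and $w$ is the top cell.

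\textbf{Step 2: compute the surgery obstruction.}
Write $\sigma(u,v,w)=\sigma_{8k}(w)+\partial(u,v)$. The top-cell term $\sigma_{8k}(w)$ is the signature (divided by $8$) of the top-cell normal map. The cross term $\partial(u,v)$ comes from the characteristic variety formula of Sullivan and Wall, namely $\frac18\langle L(M)\cdot(\ell(u)\times\ell(v)),[M]\rangle$ type products. Concretely, we compute it via Hirzebruch exactly as in the proof of Theorem~\ref{thm:main-result}(ii): pull back bundles whose $p_k$ are determined by $u$ and $v$, and read off $\partial(u,v)$ as a product of the two $\rho$-images. By Theorem~\ref{thm:main-result}(ii) these images lie in $t_{4k}\mathbb{Z}$.

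The top-cell component $\pi_{8k}(G/O)\to L_{8k}$ has image $t_{8k}\mathbb{Z}$. Its kernel is $\operatorname{coker}J$, which corresponds to connected sum with homotopy spheres $\Sigma\in bP$-complement. These are invisible up to almost diffeomorphism. So up to almost diffeomorphism the normal invariant reduces to the pair $(u,v)$, subject to $\sigma=0$ being solvable in $w$. I will interpret and verify this as the stated condition $\partial(u,v)=0$, adjusted modulo $t_{8k}$ if necessary. This identification of $\partial$ and its precise form is the main obstacle, and I would first try to confirm it on the model products $S(\xi)$ from Section~2.

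\textbf{Step 3: realise each normal invariant.}
Split $(u,v)$ according to whether the components come from $[S^{4k},SG]$ or not.
\begin{itemize}
\item For components in the image of $\pi_{4k}(SG)=\pi_{4k}^s$, use pinch maps $S^{4k}\times S^{4k}\to S^{4k}\times S^{4k}\vee S^{8k}\to S^{4k}\times S^{4k}$, composed with self-equivalences. These realise the $SG$-image normal invariants, which are the torsion part.
\item For the free part, which is $c\,t_{4k}$ in $\rho$-image, plumb $D(\xi_{l(c)})$ and $D(\xi_{l(c')})$ with intersection form hyperbolic. The boundary is a homotopy sphere; when $\partial=0$ it is standard (or lies in $bP$) and can be coned off. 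The resulting $X_{(u,v)}$ is homotopy equivalent to $S^{4k}\times S^{4k}$ because the two disk bundles have fibre-homotopically trivial sphere bundles by Lemma~\ref{thm:main}.
\end{itemize}
Its normal invariant restricted to the axes is computed by Crowley's lemma, as in Theorem~\ref{thm:main-result}(ii), giving $(u,v)$.

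\textbf{Step 4: conclude.}
By injectivity of $\eta$, any $M\simeq S^{4k}\times S^{4k}$ has normal invariant equal to that of some $X_{(u,v)}\#\Sigma$. Hence $M$ is diffeomorphic to $X_{(u,v)}\#\Sigma$ for a homotopy sphere $\Sigma$, and is therefore almost diffeomorphic to $X_{(u,v)}$.
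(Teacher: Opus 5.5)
Your outline has the same architecture as the paper: injectivity of $\eta$, the triple $(u,v,w)$, the cross term $8c(u)c(v)t_{4k}^2$, pinch maps for the $\operatorname{Im}[S^{4k},SG]$ part, plumbings of the $D(\xi_{l(c)})$ for the free part, and absorbing $w$ into the $\Theta_{8k}$-action. The cross term you flag as the main obstacle is simply imported by the paper from Crowley's Lemma~4.1. For this statement you only need its easy consequence: the given $M$ forces $\partial(u,v)=0$, because the top-cell obstructions fill exactly $t_{8k}\mathbb{Z}$.

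The genuine gap is elsewhere, in the sentence ``when $\partial=0$ it is standard (or lies in $bP$) and can be coned off''. Nothing in your plan shows that $\partial W_{c(u),c(v)}$ is the standard sphere. The parenthetical is also wrong: coning off requires a diffeomorphism $\partial W\cong S^{8k-1}$, so any nonzero class of $\partial W$ in $\Theta_{8k-1}$, inside $bP_{8k}$ or not, is an obstruction. This is where the paper does its geometric work (proof of Proposition~\ref{prop:complement-inverses}). By Wall, the boundary of an almost-closed $(4k-1)$-connected $8k$-manifold depends only on $(\tau,\chi^2)$. For $W$ one has $\tau=0$, and $\chi^2$ is read off from $p_k(W)=l(c(u))x^*+l(c(v))y^*$, see \eqref{eq:mu-pk}. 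Now take $M^\circ=M\setminus\mathring{D}^{8k}$, where $M$ is any manifold realising $(u,v)$; in the classification you can use the given $M$ itself. Make $f$ transverse to $S^{4k}\times *$ and apply Hirzebruch to the preimage: this shows $p_k(M)$ has the same coefficients $l(c(u))$, $l(c(v))$. So the invariants agree and $\partial W\cong\partial M^\circ=S^{8k-1}$.

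There is a second, smaller gap in how you assemble the constructions. Crowley's Lemma~5.2 concerns fibre homotopy equivalences of sphere bundles, and the plumbing map $N\to S^{4k}\times S^{4k}$ is not one. In any case the plumbing does not produce an arbitrary $(u,v)$. It only fixes $F(i_1^*\eta)=c(u)t_{4k}$ and $F(i_2^*\eta)=c(v)t_{4k}$, computed via the signature of the transverse preimage. The components in $\ker F=\operatorname{Im}[S^{4k},SG]$ are whatever the fibre homotopy trivialisations happen to give.

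The paper fixes this in Corollary~\ref{cor:type3-realisation} by post-composing with a pinch map $P_{(x,y)}$. It then uses the composition formula $\eta(g\circ f)=\eta(g)+(g^{-1})^*\eta(f)$, together with the fact that pinch maps are the identity on the $4k$-skeleton, so the $i^*$-components simply add. Your Step~3 splits $(u,v)$ into the two parts but never says how the two constructions combine into a single homotopy equivalence realising $(u,v)$.
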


\subsection{Computation of $Im(\eta)$}\label{sec:setup}

The cofibration $S^{4k}\vee S^{4k} \xhookrightarrow{j} X \xrightarrow{q} S^{8k}$ and the section $ p_1 $ and $ p_2$ yield a splitting $[S^{4k} \times S^{4k}, G/O] \cong [S^{4k}, G/O] \oplus [S^{4k}, G/O]\oplus [S^{8k}, G/O]$.  Every $\eta \in [S^{4k} \times S^{4k}, G/O]$ is therefore uniquely written as a \emph{triplet} $(u,v,w)$. Let, $F$ as the composite
\begin{equation}
  F\colon \pi_{4k}(G/O)
  \;\xrightarrow{\;\varphi\;}\;
  \pi_{4k}(G/\textnormal{Top})
  \;\xrightarrow[\cong]{\;\theta^{\textnormal{Top}}\;}\;
  L_{4k}(\mathbb{Z})
  \;=\;
  \mathbb{Z},
\end{equation}
where $\varphi$ is the forgetful map from the fibration $\textnormal{Top}/O \to G/O \to G/\textnormal{Top}$ and $\theta^{\textnormal{Top}}$ is the topological surgery obstruction isomorphism.  The long exact homotopy sequence gives $\operatorname{Im} F = t_{4k}\,\mathbb{Z}$ and $\ker F = Im([S^{4k},Top/O]) = \operatorname{Im}\bigl ([S^{4k},SG])$, (see \cite[Thm.~5.4]{levine-lectures}).  Choosing $e_0 \in \pi_{4k}(G/O)$ with $F(e_0) = t_{4k}$, we obtain a splitting
\begin{equation}
  \pi_{4k}(G/O) \;\cong\;  Im([S^{4k},SG]) \;\oplus\; \mathbb{Z}\cdot e_0.
\end{equation}

  For $u \in \pi_{4k}(G/O)$, define the \emph{reduced coefficient} $c(u) \in \mathbb{Z}$ by $F(u) = c(u)\,t_{4k}$; thus $c(u) = 0$ iff $u \in \ker F$.  

By \cite[Lem.~4.1]{crowley-sss}, the smooth surgery obstruction of the normal map corresponding to a triplet $(u, v, w)$ is
\begin{equation}
  \theta^{\textnormal{Diff}}(u, v, w)
  \;=\;
  F(u)\cdot F(v) \;+\; F(w)
  \;\in\;
  L_{8k}(\mathbb{Z})
  \;=\;
  \mathbb{Z},
\end{equation}
where $F(u)\cdot F(v)$ denotes the $L$-theory ring product $\alpha\colon L_{4k}(\mathbb{Z}) \times L_{4k}(\mathbb{Z}) \to L_{8k}(\mathbb{Z})$.

\begin{remark}\label{rem:product}
Under the identification $L_{4k}(\mathbb{Z}) = \mathbb{Z}$, the $L$-theory ring product satisfies $\alpha(a, b) = 8ab$ (integer multiplication). Consequently, $F(u)\cdot F(v) = 8\,c(u)\,c(v)\,t_{4k}^2$.
\end{remark}

Define the \emph{realisability obstruction}
\begin{equation}\label{eq:partial}
  \partial\colon [S^{4k}, G/O] \oplus [S^{4k}, G/O] \;\to\; bP_{8k} \;\cong\; \mathbb{Z}/t_{8k},
  \qquad
  \partial(u,v) \;:=\; 8\,c(u)\,c(v)\,t_{4k}^2 \!\!\!\pmod{t_{8k}}.
\end{equation}
By \cite[Thm.~1.5]{crowley-sss}, a pair $(u,v) \in [S^{4k}, G/O] \oplus [S^{4k}, G/O]$ arises as $j^*\eta(f)$ for some smooth homotopy equivalence $f\colon M \xrightarrow{\simeq} S^{4k}\times S^{4k}$ if and only if $\partial(u,v) = 0$.

\begin{theorem}[Realisable pairs]\label{thm:cases}
A pair $(u,v) \in [S^{4k}, G/O] \oplus [S^{4k}, G/O]$ is realisable in exactly the following three cases:
\begin{itemize}
  \item \emph{Type 1:} ($c(u) = c(v) = 0$): both $u,v$ is in $Im([S^{4k},SG])$.
  \item \emph{Type 2:} (exactly one of $c(u), c(v)$ zero): that is exactly one of $u$ and $v$ is in $Im([S^{4k},SG])$. 
  \item \emph{Type 3} ($c(u), c(v) \neq 0$): iff $d_k \mid c(u)\,c(v)$, where $d_k := t_{8k}/\gcd(8\,t_{4k}^2,\, t_{8k})$.
\end{itemize}
\end{theorem}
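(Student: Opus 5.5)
The plan is to deduce the statement directly from the realisability criterion quoted from \cite[Thm.~1.5]{crowley-sss}: a pair $(u,v)$ is realisable if and only if $\partial(u,v)=0$ in $bP_{8k}\cong\mathbb{Z}/t_{8k}$. By definition \eqref{eq:partial} and Remark~\ref{rem:product}, this condition reads
\begin{equation*}
  t_{8k} \;\big|\; 8\,c(u)\,c(v)\,t_{4k}^2 .
\end{equation*}
So the whole proof is a case analysis on whether $c(u)$ and $c(v)$ vanish, followed by an elementary divisibility computation.

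\emph{Types 1 and 2.} If at least one of $c(u),c(v)$ is zero, then $8c(u)c(v)t_{4k}^2=0$, so $\partial(u,v)=0$ and the pair is realisable. I will translate the condition $c(u)=0$ into the stated form using the splitting $\pi_{4k}(G/O)\cong \operatorname{Im}([S^{4k},SG])\oplus\mathbb{Z}e_0$. Here $c(u)=0$ iff $u\in\ker F=\operatorname{Im}([S^{4k},SG])$, which is the identification recorded just before the splitting and cited from \cite[Thm.~5.4]{levine-lectures}. Type 1 is the case where both coefficients vanish, and Type 2 is the case where exactly one does. Together these cover all pairs with $c(u)c(v)=0$, and they are all realisable.

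\emph{Type 3.} Now suppose $c(u),c(v)\neq0$. Set $g=\gcd(8t_{4k}^2,t_{8k})$ and write
\begin{equation*}
  8t_{4k}^2 = g\,a, \qquad t_{8k} = g\,d_k, \qquad \gcd(a,d_k)=1 .
\end{equation*}
Then $t_{8k}\mid 8c(u)c(v)t_{4k}^2$ is equivalent to $d_k\mid a\,c(u)c(v)$. Since $a$ and $d_k$ are coprime, this is in turn equivalent to $d_k\mid c(u)c(v)$, by Euclid's lemma. This yields exactly the stated criterion.

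\emph{Where the difficulty lies.} There is no real obstacle once the cited results are granted. The step needing the most care is checking that the inputs are consistent. First, $\partial$ must be well defined on the pair alone, i.e.\ it must not depend on the $w$-component. This holds because $F(w)$ ranges over $t_{8k}\mathbb{Z}$, which is exactly what is killed when passing to $bP_{8k}\cong \mathbb{Z}/t_{8k}$. Second, one must use that $\operatorname{Im}F=t_{4k}\mathbb{Z}$, so that $c(u)$ is a well-defined integer. I would state both points explicitly before the case analysis, so the three cases are visibly exhaustive and mutually exclusive.
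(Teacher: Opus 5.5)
Your proposal is correct and follows the same route as the paper. The paper gives no separate proof: it reads the theorem off from Crowley's criterion $\partial(u,v)=0$, i.e.\ $t_{8k}\mid 8\,c(u)\,c(v)\,t_{4k}^2$, together with $\ker F=\operatorname{Im}([S^{4k},SG])$, and your coprimality argument with $\gcd(8t_{4k}^2,t_{8k})$ simply makes explicit the elementary divisibility step left implicit in the definition of $d_k$.
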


For each realizable pair $(u,v)$,the $w$-component is determined up to an element in $\pi_{8k}(G/O)$~\cite[Lem.~4.1]{crowley-sss}. So, any two homotopy equivalences realizing same pair $(u,v)$ differs only  by a action of homotopy sphere. So, the corresponding manifolds are almost diffeomorphic. And so, we only need to find an inverse realizing each $(u,v)$-pair. That is what we will do next and for the 3 types.

\subsection{Inverses of type 1 via pinch maps}\label{sec:sg-inverses}

\begin{proposition}\label{prop:sg-inverses}
For every pair $(u,v) \in Im([S^{4k}, SG] \oplus [S^{4k}, SG])$, there exists a pinch map homotopy equivalence $P_{(u,v)}\colon S^{4k}\times S^{4k} \xrightarrow{\simeq} S^{4k}\times S^{4k}$ with $i^*\eta(P_{(u,v)}) = (u,v)$.
\end{proposition}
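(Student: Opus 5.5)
We will build $P_{(u,v)}$ as a composite of two pinch maps, one for each sphere factor, and then compute its normal invariant factor by factor. Write $X=S^{4k}\times S^{4k}$ and let $\hat u,\hat v\in\pi_{4k}(SG)=\pi^s_{4k}$ be lifts of $u,v$ under $[S^{4k},SG]\to[S^{4k},G/O]$.

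\textbf{Constructing the pinch maps.} Since $4k<8k-1$, the stable class $\hat u$ is represented by an unstable map $\alpha\colon S^{8k-1}\to S^{4k}$ in the stable range. Choose an embedded disc $D^{8k}\subset X$ and collapse its boundary collar to get $X\to X\vee S^{8k}$. Then define
\[
P_\alpha\colon X\to X\vee S^{8k}\xrightarrow{\ \mathrm{id}\vee (i_1\circ\Sigma\alpha')\ } X,
\]
where $\alpha'\in\pi_{8k}(S^{4k})$ is $\Sigma\alpha$ (equivalently, a map in the stable class of $\hat u$) and $i_1\colon S^{4k}\to X$ is the inclusion of the first factor. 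The composite acts as the identity on homology, so it is a homotopy equivalence by Whitehead. Define $P_\beta$ the same way using $\hat v$ and the second factor, and set $P_{(u,v)}=P_\alpha\circ P_\beta$.

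\textbf{Computing the normal invariant.} For this step we use the standard description of the normal invariant of a self-homotopy equivalence. Up to the $G/O$ structure, $\eta(h)$ is the image of the element of $[X,SG]$ measuring the failure of $h$ to preserve the stable normal spherical fibration, and for a pinch map this element is the composite $X\xrightarrow{q} S^{8k}\to\cdots$. Rather than arguing this way, we will follow the Crowley/Madsen–Milgram approach. Regard $X\to X\vee S^{8k}\to X$ as a coaction of $\alpha$, and identify the difference class with the adjoint $X\to SG$, which on restriction to $S^{4k}\vee S^{4k}$ gives $(\hat u,0)$. Concretely, the diagram of cofibrations shows that $h^*$ of the Spivak fibration differs from the trivial one by the composite of projection $p_1$ with $\hat u$ (or $p_2$ with $\hat v$), up to a top-cell term. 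Restricting via $j$ then yields $i^*\eta(P_\alpha)=(u,0)$, and similarly $i^*\eta(P_\beta)=(0,v)$.

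\textbf{Additivity under composition.} The composition formula $\eta(g\circ h)=\eta(h)+(h^{-1})^*\eta(g)$ holds whenever the invariants come from $[X,SG]$, using the H-space structure by Whitney sum. Since $P_\beta$ acts as the identity on $j$-restriction, it gives $i^*\eta(P_{(u,v)})=(u,v)$. The $w$-component is not controlled, and it does not need to be.

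\textbf{Main obstacle.} The delicate step is identifying the normal invariant of a single pinch map with $p_1^*\hat u$ on the $4k$-skeleton, rather than merely knowing it lies in the image of $[X,SG]$. We will first reduce to the restriction along $i_1$, where the pinch map becomes the map $S^{4k}\to S^{4k}$ modified by $\alpha$ on the top cell of the product. We then invoke the known computation (Brumfiel / Madsen–Milgram) that the normal invariant of the degree-one map $S^p\times S^q\to S^p\times S^q$ built from $\alpha\in\pi_{p+q}(S^p)$ is the image of $\Sigma^\infty\alpha$ pulled back along the projection. If needed, we will also check it via Crowley's Lemma 5.2 \cite{crowley-escher}, as in the proof of Theorem~\ref{thm:main-result}(ii).
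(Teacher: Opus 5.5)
\textbf{Verdict: there is a genuine gap, at the step you call the main obstacle.} Your overall plan is the paper's: one pinch map per factor, compute each invariant on $S^{4k}\vee S^{4k}$, then add them with the composition formula, using that pinch maps fix the lower skeleton. The invariant lying in the image of $[X,SG]$ is automatic since $X$ is stably parallelisable. But the key identification is only asserted, via an unspecified ``known computation'', and the assertion is wrong.

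\textbf{The computation lands on the other factor.} Pinching with $i_1\circ y$, $y\in\pi_{8k}(S^{4k})$, does \emph{not} give $(\{y\},0)$. It gives $i^*\eta=(0,\pm\{y\})$.
\begin{itemize}
\item The paper gets this from Crowley--Hambleton's Lemma~7.4. The tangential normal invariant of $P(\alpha)$ is built from the Spanier--Whitehead dual of (the bundle map covering) $\alpha=\iota\circ y$. That dual factors through the umkehr map $\iota^!\colon M\to\mathrm{Th}(\nu_\iota)$ of $S^{4k}_1\times\{*\}\subset M$, and $(\iota^!)_*$ kills $H_{4k}(S^{4k}_1)$ while being an isomorphism on $H_{4k}(S^{4k}_2)$.
\item A transversality check confirms it. Put the pinching disc away from $\{x_0\}\times S^{4k}_2$ and $S^{4k}_1\times\{z_0\}$. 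The preimage of $\{x_0\}\times S^{4k}_2$ is $\{x_0\}\times S^{4k}_2 \sqcup y^{-1}(x_0)$, with the framed manifold $y^{-1}(x_0)$ sent to a point. So the splitting invariant along $S^{4k}_2$ is the image of $\pm\{y\}$. The preimage of $S^{4k}_1\times\{z_0\}$ is just itself, giving $0$.
\item For $S^p\times S^q$ with $p\neq q$, your formula $p_1^*\{y\}$ is not even defined, since $\{y\}\in\pi^s_q$ while $p_1^*$ needs a class in $\pi^s_p$.
\end{itemize}
The proposition still follows, but only once you supply an actual computation like the ones above. You must also interchange roles: pinch the first factor with a lift of $\pm\hat v$ and the second with a lift of $\pm\hat u$.

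\textbf{The construction has degree errors that hide a real step.}
\begin{itemize}
\item A map $S^{8k-1}\to S^{4k}$ stabilises into $\pi^s_{4k-1}$, not $\pi^s_{4k}$. Also $\Sigma\alpha$ lies in $\pi_{8k}(S^{4k+1})$, so your $\alpha'$ is not defined as written.
\item What you need is $y\in\pi_{8k}(S^{4k})$ with stable class $\hat u$. But $\pi_{8k}(S^{4k})$ is outside the Freudenthal range ($8k>2\cdot 4k-1$), so Freudenthal does not even give surjectivity.
\item Surjectivity of $\pi_{8k}(S^{4k})\to\pi^s_{4k}$ is true, via the EHP argument the paper uses. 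In the exact sequence $\pi_{8k}(S^{4k})\xrightarrow{E}\pi_{8k+1}(S^{4k+1})\xrightarrow{H}\pi_{8k+1}(S^{8k+1})\cong\mathbb{Z}$, the map $E$ is onto because $H$ vanishes on the finite group $\pi_{8k+1}(S^{4k+1})$.
\item The later suspensions are in the Freudenthal epimorphism, then isomorphism, range.
\end{itemize}

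\textbf{A smaller point.} Your composition formula has the inner and outer maps interchanged; it should pull back $\eta(h)$ along $g^{-1}$, i.e.\ $\eta(g\circ h)=\eta(g)+(g^{-1})^*\eta(h)$. This is harmless here only because both pinch maps restrict to the identity on $S^{4k}\vee S^{4k}$.
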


\begin{proof}
Write $M := S_1^{4k}\times S_2^{4k}$ with factor inclusions $\iota\colon S_1^{4k} \hookrightarrow M$ (via $s \mapsto (s,*)$) and $\ell\colon S_2^{4k} \hookrightarrow M$ (via $s \mapsto (*,s)$), and projections $p_1, p_2$.  
\medskip
Choose $y \in \pi_{8k}(S^{4k})$ and form the composite
\begin{equation}
  \alpha \;:=\; \iota \circ y \;\colon\; S^{8k} \;\xrightarrow{\;y\;}\; S_1^{4k} \;\xrightarrow{\;\iota\;}\; M.
\end{equation}
Define the \emph{pinch map} $P(\alpha)\colon M \to M$ as
\begin{equation}
  P(\alpha)\colon M \;\xrightarrow{\;q'\;}\; M \vee S^{8k} \;\xrightarrow{\;\mathrm{Id}\,\vee\,\alpha\;}\; M \vee M \;\xrightarrow{\;\nabla\;}\; M,
\end{equation}
where $q'$ collapses a small embedded disk $D^{8k} \subset e^{8k}$ in the top cell. \emph{$P(\alpha)$ is a tangential homotopy equivalence}~\cite[Secs.~5,~7]{crowley-hambleton}.  Hence, $\eta(P(\alpha)) \in \operatorname{Im}\bigl(\tau_*\colon [M, SG] \to [M, G/O]\bigr)$. The proof of the proposition has two component; (A) Computing the restriction of the tangential normal invariant, (B) Establishing the surjectivity.

\medskip

\noindent\textbf{(A) Computing  $i^*\eta^t(P(\alpha))$ :}

\medskip

\noindent Since $M$, $S_1^{4k}$, and $S^{8k}$ are all stably parallelisable, their normal bundles (in a fixed ambient $\mathbb{R}^{8k+N}$, $N \gg 8k$) are trivial.  The maps $y$ and $\iota$ are therefore covered by bundle maps $b_y$ and $b_\iota$ between these trivial normal bundles.  We have the following ladder of bundle maps covering $S^{8k} \xrightarrow{y} S_1^{4k} \xrightarrow{\iota} M$:

\begin{equation}
    \begin{tikzcd}[column sep=3em, row sep=2.2em]
    S^{8k} \times \mathbb{R}^N
    \arrow[r, "y \times \mathrm{Id}"] \arrow[d, "\psi"', "\cong"]
    & S_1^{4k} \times \mathbb{R}^N
    \arrow[r, "\iota \times \mathrm{Id}"] \arrow[d, "\varphi"', "\cong"]
    & M \times \mathbb{R}^N
    \arrow[d, "\tilde\varphi"', "\cong"]
    \\
    \nu_{S^{8k}}
    \arrow[r, "b_y"] \arrow[d]
    & \iota^*(\nu_M)
     \arrow[r, "b_\iota"] \arrow[d]
    & \nu_M
    \arrow[d]
    \\
    S^{8k}
    \arrow[r, "y"]
    & S_1^{4k}
    \arrow[r, "\iota"]
    & M 
\end{tikzcd}
\end{equation}

\noindent Passing to Thom spaces of middle row and taking $(8k+2N)$-Spanier--Whitehead duals $D = D_{8k+2N}$ (using $\Sigma^NX_+ \simeq D(\mathrm{Th}(\nu_X))$ and $D(Th(\iota^*(\nu_M)))=Th(\nu(\iota))$ \cite[Sec.~7]{crowley-hambleton}) leads us to the following diagram:
\begin{equation}
\begin{tikzcd}[column sep=2.6em, row sep=2.4em]
S^{8k+N}
  \arrow[r] \arrow[d, dotted, "D"]
& \mathrm{Th}(\nu_{S^{8k}})
  \arrow[r, "\mathrm{Th}(b_y)"] \arrow[d, dotted, "D"]
& \mathrm{Th}(\iota^*\nu_M)
  \arrow[r, "\mathrm{Th}(b_\iota)"] \arrow[d, dotted, "D"]
& \mathrm{Th}(\nu_M)
  \arrow[d, dotted, "D"]
\\
S^N
& \Sigma^N S^{8k}_+
  \arrow[l, "\Sigma^N C_{s_0}"']
& \Sigma^N \mathrm{Th}(\nu_\iota)
  \arrow[l, "D(\mathrm{Th}(b_y))"']
& \Sigma^N M_+
  \arrow[l, "\Sigma^N \iota^!_+"']
\end{tikzcd}
\end{equation}
This is not a commutative diagram; the dotted arrows indicate Spanier--Whitehead duals.  Here $\iota^!_+\colon M_+ \to \mathrm{Th}(\nu_\iota)$ is the Pontryagin--Thom collapse for the embedding $\iota\colon S_1^{4k} \hookrightarrow M$, and ${+}$ goes to basepoint. \cite[Sec.~7]{crowley-hambleton}

By \cite[Lem.~7.4]{crowley-hambleton}, the tangential normal invariant of $P(\alpha)$ is $\eta^t(P(\alpha)) = [1] * g(\alpha) \in [M, SG]$, where $[1]*$ denotes the unit-component shift $[M, (\Omega^N S^N)_0]_* \xrightarrow{\cong} [M, SG]$, and $g(\alpha) \in [M, (\Omega^N S^N)_0]_*$ is
\begin{equation}
  g(\alpha)\colon M \;\xrightarrow{\;\iota^!\;}\; \mathrm{Th}(\nu_\iota) \;\xrightarrow{\;\mathrm{adj}\bigl(\Sigma^N C_{s_0} \circ D(\mathrm{Th}(b_y))\bigr)\;}\; \Omega^N S^N.
\end{equation}
We are interested in its adjoint $\tilde\eta^t(P(\alpha)) = \mathrm{adj}(g(\alpha)) \in \{M, S^0\} \cong [\Sigma^N M, S^N]$:
\begin{equation}
  \tilde\eta^t(P(\alpha))\colon
  \Sigma^N M
  \;\xrightarrow{\;\Sigma^N \iota^!\;}\;
  \Sigma^N \mathrm{Th}(\nu_\iota)
  \;\xrightarrow{\;D(\mathrm{Th}(b_y))\;}\;
  \Sigma^N S^{8k}_+
  \;\xrightarrow{\;\Sigma^N C_{s_0}\;}\;
  S^N.
\end{equation}

\medskip

\noindent We compute the restriction $i_2^*\,\mathrm{adj}(g(\alpha)) \in \{S_2^{4k}, S^0\} = \pi_{4k}^S$ by restricting the bottom composite to $\Sigma^N S_2^{4k} \xrightarrow{i_2} \Sigma^N M$. First, we need to understand $\Sigma^N\iota^!$ on homology because the domain and the range is wedges of sphere.
\medskip

 Write $S^{4k}_2$ as $S_2^{4k} = D_+^{4k}\cup_\partial D_-^{4k}$. The tubular neighbourhood  $\nu_\iota$ of $S_1^{4k}\times\{*\}$ in $M$ is $S_1^{4k} \times D^{4k}_+ \subset S_1^{4k} \times S_2^{4k}$  and its complement is
\begin{equation}
  A := M \setminus int{\bigl(S_1^{4k}\times D_+^{4k}\bigr)} = S_1^{4k}\times D_-^{4k}.
\end{equation}
The Pontryagin--Thom collapse is the quotient map
\begin{equation}
  \iota^! = q\colon M \longrightarrow M/A = \frac{S_1^{4k}\times S_2^{4k}}{S_1^{4k}\times D_-^{4k}} = \mathrm{Th}(\nu_\iota).
\end{equation}
The long exact sequence of the pair $(M, A)$ (with $H_*(M,A) \cong \tilde H_*(M/A)$) in degree $4k$ gives
\begin{equation}
  H_{4k}(A)
  \xrightarrow{\;i_*\;}
  H_{4k}(M) \cong Z \oplus Z
  \xrightarrow{\;(\iota^!)_* = q_*\;}
  H_{4k}(M/A) \cong Z
  \xrightarrow{\;\partial\;}
  H_{4k-1}(A) = 0.
\end{equation}
From exactness we get, $(\iota^!)_* = 0$, on the $H_{4k}(S_1^{4k})$ summand and $(\iota^!)_*$ restricts to an isomorphism on the $H_{4k}(S_2^{4k})$ summand \emph{Thus $(\iota^!)_* = P_2$} (projection onto the second component).

\medskip

Now that we know the map $\iota^!$, in Homology, we can try to find the map $adj(g(\alpha))$.  Taking Thom-space maps on the upper squares of the bundle ladder gives the diagram
\begin{equation}
\begin{tikzcd}[column sep=3em, row sep=2.6em]
\mathrm{Th}(\nu_{S^{8k}})
  \arrow[r, "\mathrm{Th}(\psi)", "\cong"'] \arrow[d, "\mathrm{Th}(b_y)"']
& S^{8k}_+ \wedge S^N
  \arrow[r, "q", "\simeq"'] \arrow[d, "\Sigma^N y_+"']
& S^{8k+N} \vee S^N
  \arrow[d, "\Sigma^N y \vee \mathrm{Id}"]
\\
\mathrm{Th}(\iota^*\nu_M)
  \arrow[r, "\mathrm{Th}(\varphi)", "\cong"']
& S^{4k}_+ \wedge S^N
  \arrow[r, "p", "\simeq"']
& S^{4k+N} \vee S^N
\end{tikzcd}
\end{equation}
Call the upper composite $m$ and the lower composite $h$; both are homotopy equivalences.  The $S_j^{4k}$-component of $i^*\tilde\eta^t(P(\alpha))$ is the composite
\begin{equation}
\Sigma^N S_j^{4k} \;\xrightarrow{\;i_j\;}\; \Sigma^N M \;\xrightarrow{\;\Sigma^N \iota^!\;}\; \Sigma^N \mathrm{Th}(\nu_\iota) \;\xrightarrow{\;D(\mathrm{Th}(b_y))\;}\; \Sigma^N S^{8k}_+ \;\xrightarrow{\;\Sigma^N C_{s_0}\;}\; S^N.
\end{equation}
which fits the following diagram.  And, by the following diagram we only need to compute the lower composite map.\cite[Ch.~6]{spanier-duality}
\begin{equation}
\begin{tikzcd}[column sep=3em, row sep=2.8em]
\Sigma^N M
  \arrow[r, "\Sigma^N \iota^!"]
& \Sigma^N \mathrm{Th}(\nu_\iota)
  \arrow[r, "D(\mathrm{Th}(b_y))"] \arrow[d, "D(h)"', "\simeq"]
& \Sigma^N S^{8k}_+
  \arrow[r, "\Sigma^N C_{s_0}"] \arrow[d, "D(m)"', "\simeq"]
& S^N
\\
\Sigma^N S_j^{4k}
  \arrow[u, "i_j"]
  \arrow[r, "p"']
& S^{4k+N} \vee S^{8k+N}
  \arrow[r, "\{y\} \vee \mathrm{Id}"']
& S^N \vee S^{8k+N}
  \arrow[ru, "\tilde c"', bend right=12]
\end{tikzcd}
\end{equation}
For $j=1$, $p=0$ and for $j=2$, $p=inclusion$. That gives, $i^*\tilde\eta^t(P(\alpha))=(0,\pm \{y\})$. Similarly, with $\beta: S^{8k} \xrightarrow{x} S_2^{4k} \hookrightarrow M$, we have, $i^*\tilde{\eta}^t(P(\beta))=(\pm \{x\},0)$.

\medskip
\noindent\textbf{(B) Surjectivity}
\medskip

\noindent The stabilisation $\pi_{8k}(S^{4k}) \to \pi_{4k}^S$ is surjective by the EHP exact sequence and finiteness of $\pi_{4k}^S$.  Hence as $y$ ranges over $\pi_{8k}(S^{4k})$, the stable class $\{y\}$ exhausts $\pi_{4k}^S \cong \pi_{4k}(SG) = [S_2^{4k}, SG]$.  We get,
\begin{equation}
  \bigl\{i_2^*\,\eta^t\bigl(P(\iota\circ y)\bigr)\bigr\}_{y \in \pi_{8k}(S^{4k})} \;=\; [S_2^{4k},\, SG].
\end{equation}

\noindent In the same way, taking the pinch maps on the other coordinate, we also get, 
\begin{equation}
  \bigl\{i_2^*\,\eta^t\bigl(P(l\circ x)\bigr)\bigr\}_{x \in \pi_{8k}(S^{4k})} \;=\; [S_1^{4k},\, SG].
\end{equation}

\medskip

\noindent Consider the composition $P(\iota\circ y) \circ P(\ell \circ x)\colon M \to M$.  By the composition formula \cite[Cor.~2.6]{mtw}:
\begin{equation}
  \eta^t(f\circ g) \;=\; \eta^t(f) \;+\; (f^{-1})^*\,\eta^t(g).
\end{equation}
Both pinch maps are the identity on the lower skeleton $M^{(4k)} = S^{4k}\vee S^{4k}$, so, $(P(\iota\circ y)^{-1})^*$ acts as the identity on $j^*([M, SG])$.  Restricting to the lower skeleton:
\begin{equation}
  j^*\eta^t\bigl(P(\iota\circ y)\circ P(\ell\circ\beta)\bigr)
  \;=\;
  j^*\eta^t\bigl(P(\iota\circ y)\bigr) \;+\; j^*\eta^t\bigl(P(\ell\circ\beta)\bigr).
\end{equation}
So, these compositions of pinch maps realize exhausts all of $[S_1^{4k}, SG] \oplus [S_2^{4k}, SG]$.  Passing to $[M, G/O]$ via $\tau_*$: $\eta\bigl(P(\iota\circ y)\circ P(\ell\circ\beta)\bigr)$ realises all of $\operatorname{Im}\bigl([S_1^{4k}, SG]\bigr) \oplus \operatorname{Im}\bigl([S_2^{4k}, SG]\bigr)$ in $[M, G/O]$. This proves (B)
\medskip

\noindent Collecting from (A) and (B), for every $(u,v) \in \operatorname{Im}\bigl([S_1^{4k}, SG]\bigr) \oplus \operatorname{Im}\bigl([S_2^{4k}, SG]\bigr)$, the pinch composition $P(\iota\circ y)\circ P(\ell\circ\beta)$ (with appropriate $y, \beta$) is a homotopy equivalence realising $(u,v)$ as lower-skeleton normal invariant. For any $(u,v)$, any choice of such map will be called $P_{(u,v)}$ and notice that the choice is not unique.
\end{proof}

\subsection{Inverses of type 3 via plumbing}\label{sec:complement}

\begin{proposition}\label{prop:complement-inverses}
Let $(u,v) \in [S^{4k}, G/O]^{\oplus 2}$ with $c(u), c(v) \neq 0$ and $d_k \mid c(u)\,c(v)$ (Theorem~\ref{thm:cases}).  Then there exists a smooth homotopy equivalence $f_{c(u),c(v)}\colon N_{c(u),c(v),\phi} \xrightarrow{\simeq} S^{4k}\times S^{4k}$ with $F(i_1^*\eta(f_{c(u),c(v)})) = c(u)\,t_{4k}$ and $F(i_2^*\eta(f_{c(u),c(v)})) = c(v)\,t_{4k}$.
\end{proposition}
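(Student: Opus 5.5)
We will build $N_{c(u),c(v),\phi}$ from a plumbing. Set $a:=c(u)$, $b:=c(v)$, and take the disk bundles $D(\xi_{l(a)})$ and $D(\xi_{l(b)})$ over $S^{4k}$ from Section~2. By Lemma~\ref{thm:main} both sphere bundles are fibre-homotopy trivial. Plumbing the two disk bundles once gives a parallelisable-away-from-cores $8k$-manifold $W_{a,b}$. It has intersection form the hyperbolic form $H$ on $\mathbb{Z}^2$, because $e(\xi_{l(a)})=e(\xi_{l(b)})=0$. Its boundary $\Sigma_{a,b}=\partial W_{a,b}$ is a homotopy $(8k-1)$-sphere, since the form is unimodular and $W_{a,b}$ is $(4k-1)$-connected.

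\textbf{Step 1: identify the boundary sphere.} Compute $[\Sigma_{a,b}]\in\Theta_{8k-1}$ by the Eells--Kuiper/Milnor signature method. The signature of $W_{a,b}$ is $0$. However $p_k$ of $W_{a,b}$ is nonzero on both cores, so $p_k^2[W,\partial W]$ contributes. Evaluating the $\hat{A}$/$L$-genus defect then shows that $\Sigma_{a,b}$ is (up to sign) $8ab\,t_{4k}^2$ times the Milnor generator modulo $t_{8k}$, i.e.\ $\Sigma_{a,b}=\partial(u,v)\in bP_{8k}$. This matches the product formula of Remark~\ref{rem:product} and \cite[Lem.~4.1]{crowley-sss}. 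If the Pontryagin computation proves awkward, the fallback is to argue directly through the surgery obstruction. The normal map $W_{a,b}\to D^{4k}\times D^{4k}$, rel a boundary map to a homotopy sphere, has obstruction $F(u)F(v)$ by the same product lemma. Under the hypothesis $d_k\mid ab$ we get $\partial(u,v)=0$ (Theorem~\ref{thm:cases}), so $\Sigma_{a,b}$ is diffeomorphic to the standard $S^{8k-1}$. Fix a diffeomorphism $\phi\colon\Sigma_{a,b}\to S^{8k-1}$ and set $N_{a,b,\phi}:=W_{a,b}\cup_\phi D^{8k}$.

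\textbf{Step 2: build the homotopy equivalence.} The fibre-homotopy trivialisations $f_{l(a)},f_{l(b)}$ of Lemma~\ref{thm:main} extend over the disk bundles to maps $D(\xi_{l(\cdot)})\to S^{4k}\times D^{4k}$ that respect the plumbing squares. They therefore glue to a degree-one map $W_{a,b}\to S^{4k}\vee S^{4k}\cup(\text{collar})$, which we extend over the top disk to $f\colon N_{a,b,\phi}\to S^{4k}\times S^{4k}$. The only obstruction to extending lies in $\pi_{8k-1}(S^{4k}\times S^{4k})$ on the attaching map. Both spaces are built from $S^{4k}\vee S^{4k}$ by attaching an $8k$-cell along a map with the same Hopf/Whitehead invariant, namely the Whitehead product plus terms $J(\xi)$. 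Fibre-homotopy triviality kills the $J$-terms, so the attaching maps agree up to homotopy. Hence $f$ exists and is an isomorphism on homology, and therefore a homotopy equivalence by Whitehead's theorem.

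\textbf{Step 3: compute the normal invariant.} Restrict $\eta(f)$ along $i_1$. The restriction of the normal map to the first core is exactly the normal map $\gamma_a$ appearing in the proof of Theorem~\ref{thm:main-result}(ii), via Crowley's Lemma~5.2 applied to the sphere-bundle part. So $F(i_1^*\eta(f))=\sigma(\gamma_a)=a\,t_{4k}$, and symmetrically $F(i_2^*\eta(f))=b\,t_{4k}$.

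\textbf{Main obstacle.} The delicate step is Step 2: checking that the two $8k$-cell attaching maps coincide, and that the chosen trivialisations are compatible over the plumbing region. We would first handle this using the $J$-homomorphism/EHP analysis from the sufficiency part of Lemma~\ref{thm:main}. The residual ambiguity in $\pi_{8k-1}$ can be absorbed by changing the trivialisation by an element of $\pi_{4k}(SG)$, which alters only the $\ker F$-component of $u,v$ and leaves $c(u),c(v)$ unchanged.
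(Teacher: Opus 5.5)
\textbf{Verdict: genuine gap in Step~1.} Your architecture matches the paper's. You plumb $D(\xi_{l(a)})$ with $D(\xi_{l(b)})$, close up once $\partial W_{a,b}$ is known to be standard, map to $S^{4k}\times S^{4k}$ using the fibre-homotopy trivialisations of Lemma~\ref{thm:main}, and read off $F(i_j^*\eta(f))$ from the signature of the core normal map. Steps~2 and~3 are essentially fine, and Step~2 is not the delicate step. The obstruction you describe is $(J(\xi_{l(a)}),J(\xi_{l(b)}))\in\pi_{8k-1}(S^{4k})^{2}$ for the unstable $J$. This factors through $\pi_{4k-1}(SG(4k))$, so it vanishes by fibre-homotopy triviality; this is the statement $E_1(\beta)=0$ in the sufficiency proof of Lemma~\ref{thm:main}. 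No ambiguity needs absorbing, and the paper simply plumbs the fibre homotopy equivalences into $(S^{4k}\times S^{4k})^\circ$ and cones off. In Step~3, Crowley's Lemma~5.2 concerns sphere-bundle total spaces and does not literally apply to $N$. What you really use is that the normal data on the first core is $\xi_{l(a)}$ with its trivialisation; the computation of Theorem~\ref{thm:main-result}(ii) then gives $\pm a\,t_{4k}$.

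Step~1 is the actual content of the proposition, and there it falls short.
\begin{itemize}
\item You assert that an Eells--Kuiper/$\hat A$ computation gives $[\partial W_{a,b}]=\pm 8ab\,t_{4k}^2\,\Sigma_M$, but you do not carry it out.
\item More importantly, you never justify $\partial W_{a,b}\in bP_{8k}$. Such invariants only detect the $bP_{8k}$-part, and $\Theta_{8k-1}$ is in general larger (e.g.\ $\Theta_{15}\cong bP_{16}\oplus\mathbb{Z}/2$). Since $W_{a,b}$ is not parallelisable, a vanishing invariant would not show the boundary is standard.
\item The fallback does not help. There is no degree-one normal map $W_{a,b}\to D^{4k}\times D^{4k}$: a bundle map from $\nu_W$ to a bundle over a disc would make $W$ stably parallelisable, contradicting $p_k(W)\neq 0$.
\item Crowley's product formula concerns closed normal maps to $S^{4k}\times S^{4k}$, so applying it ``rel boundary'' to $W$ is exactly what needs proof. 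As written, $\Sigma_{a,b}=\partial(u,v)$ is assumed rather than derived.
\end{itemize}

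The paper avoids this differently. It takes a closed realisation $N'$ of $(u,v)$ from Crowley's realisability theorem and computes $p_k(N')$ by the transverse-preimage signature argument. It then invokes Wall's result that the boundary of an almost-closed $(4k-1)$-connected $8k$-manifold depends only on $(\tau,\chi^2)$, and these agree for $W$ and $N'^\circ$.

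Your direct route can be repaired as follows.
\begin{enumerate}
\item The plumbed trivialisations give a degree-one normal map of pairs $(W,\partial W)\to((S^{4k}\times S^{4k})^\circ,S^{8k-1})$ that is a homotopy equivalence.
\item Its boundary normal invariant is $S^{8k-1}\xrightarrow{[\iota_1,\iota_2]}S^{4k}\vee S^{4k}\to G/O$. This is a Whitehead product in the $H$-space $G/O$, hence zero, so $\partial W\in bP_{8k}$.
\item Glue on a normal bordism $V$ from $\partial W\to S^{8k-1}$ to the identity and cap off. This gives a closed normal map to $S^{4k}\times S^{4k}$ whose obstruction is $\sigma(V)/8$, by Novikov additivity and $\sigma(W)=0$.
\item By Crowley's Lemma~4.1 and your Step~3, that obstruction is $8ab\,t_{4k}^2$ modulo $t_{8k}$.
\end{enumerate}
This identifies $[\partial W_{a,b}]=\pm\partial(u,v)$, which is standard when $d_k\mid ab$.
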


\begin{proof}
The construction follows Milnor's plumbing used in sphere products by \cite{crowley-sss}.

\medskip
\noindent\textbf{(A) construction of homotopy equivalences}
\medskip

First we construct the manifold from plumbing, then we will define the homotopy equivalences from each piece.

From the preceding section we have, for each $c \in \mathbb{Z}$, the rank-$4k$ oriented vector bundle $\xi_{l(c)} \to S^{4k}$ with $l(c) = c\cdot a_k(2k-1)!\operatorname{den}(B_k/4k)$, Euler class $e(\xi_{l(c)}) = 0$, and Pontryagin class $p_k(\xi_{l(c)}) = l(c)\cdot\omega$.  Given a pair $(c(u), c(v))$ with $c(u), c(v) \neq 0$, form the \emph{Milnor plumbing}
\begin{equation}
  W \;=\; W_{c(u),\,c(v)} \;:=\; D\bigl(\xi_{l(c(u))}\bigr) \;\natural\; D\bigl(\xi_{l(c(v))}\bigr)
\end{equation}
by choosing a basepoint $* \in S^{4k}$, trivialising both disk bundles over a small disk neighbourhood $D^{4k}_* \subset S^{4k}$ of $*$, and identifying $D^{4k}\times D^{4k} \subset D(\xi_{l(c(u))})$ with $D^{4k}\times D^{4k} \subset D(\xi_{l(c(v))})$ by exchanging fibre and base coordinates: $(x_1, x_2) \mapsto (x_2, x_1)$.  The result is a compact smooth $8k$-manifold $W$ with boundary $\partial W$, a homotopy sphere. Manifolds of the type $W$ --- compact, $(4k{-}1)$-connected, $8k$-dimensional, with boundary a homotopy sphere --- were classified by Wall \cite{wall-2n} using their intersection form $\lambda_W\colon H_{4k}(W)\times H_{4k}(W) \to \mathbb{Z}$ and stable tangential invariant $S\alpha(W)\colon H_{4k}(W) \to \pi_{4k-1}(SO) \cong \mathbb{Z}$.

For $W = W_{c(u),\,c(v)}$, these data are:
\begin{equation}
  H_{4k}(W) = \mathbb{Z}^2(x,y), \quad
  \lambda_W = \begin{pmatrix} 0 & 1 \\ 1 & 0 \end{pmatrix}, \quad
  S\alpha(W)(x) = s_k\,c(u), \quad
  S\alpha(W)(y) = s_k\,c(v),
\end{equation}
where $x = [S^{4k}_1]$ and $y = [S^{4k}_2]$ are the zero-section classes;  $\lambda_W$ is the standard hyperbolic form (the two zero sections meet transversely at the single plumbing point with intersection $+1$, and each has trivial self-intersection since $e(\xi_{l(c)}) = 0$);  and $s_k$ is the constant $l(1)/\lambda_k$, where $\lambda_k = \hat{p}_k(\beta_k)$ is the Pontryagin number of the Bott generator $\beta_k \in \pi_{4k-1}(SO)$.  In particular, $\sigma(W) = 0$.

\medskip

The boundary $\partial W$ is a homotopy $(8k{-}1)$-sphere, with class $[\partial W] \in \Theta_{8k-1}$.  By Wall's classification of almost-closed $(4k{-}1)$-connected $8k$-manifolds (the case $n = 4k$ even), this class depends only on the pair $(\tau, \chi^2)$ --- the signature $\tau = \sigma(W)$ and the self-intersection $\chi^2$ of the tangential invariant $\chi = S\alpha$~\cite[Cor.~2 and Thm.~3, pp.~171--172]{wall-2n}.  We therefore detect standardness of $[\partial W]$ in $\Theta_{8k-1}$ directly, by computing these terms and showing that they are equal with an almost closed manifold with standard sphere boundary .

For $W$ we have $\tau(W) = \sigma(W) = 0$ and
\begin{equation}\label{eq:mu-pk}
  p_k(W) \;=\; l(c(u))\,x^* + l(c(v))\,y^*.
\end{equation}
The self-intersection is computed from \eqref{eq:mu-pk} via the cup-product pairing on $[W,\partial W]$: writing $\hat x := \mathrm{PD}(x)$, the relations $\langle\hat x, x\rangle = x\cdot x = 0$ and $\langle\hat x, y\rangle = x\cdot y = 1$ force $\hat x = y^*$, and likewise $\hat y = x^*$; as the cup of Poincaré duals is dual to intersection, $(x^*)^2 = \hat y\smile\hat y \mapsto y\cdot y = 0$ and $x^*\smile y^* = \hat y\smile\hat x \mapsto y\cdot x = 1$~\cite[\S VI. Plumbing]{bredon}.  Hence by the above and ~\cite[eq. 13]{wall-2n}
\begin{equation}\label{eq:pk2-W}
  p_k^2[W,\,\partial W] \;=\; 2\,l(1)^2\,c(u)\,c(v),
  \qquad
  \chi^2(W) \;=\; \frac{2\,l(1)^2\,c(u)\,c(v)}{a_K^2 \cdot ((2k-1)!)^2} 
\end{equation}

We claim that $\partial W \cong S^{8k-1}$ if and only if $d_k \mid c(u)\,c(v)$.  If $d_k \mid c(u)c(v)$, then by the realizability theorem there is a closed smooth $N'$ and a homotopy equivalence $f'\colon N' \xrightarrow{\simeq} S^{4k}\times S^{4k}$ with $i^*\eta(f') = (u,v)$, hence $F(i_1^*\eta(f')) = c(u)\,t_{4k}$ and $F(i_2^*\eta(f')) = c(v)\,t_{4k}$.  Make $f'$ transverse to $S_1^{4k}\times\{*\}$ and set $N'_1 := f'^{-1}(S_1^{4k}\times\{*\})$, a closed $4k$-submanifold; then $g := f'|_{N'_1}\colon N'_1 \to S^{4k}$ is a degree-$1$ normal map. And, using the formula for surgery obstrction at dimension $4k$, hirzebruch signature theorem, naturality of pontryajin clases; we get the following equation:
\begin{equation}
   c(u)\,t_{4k} = \; F(i_1^*\eta(f')) \; = \tfrac18\,\sigma(N'_1) \;= \frac{1}{8} s_k^L\,p_k(N'_1)[N'_1] =\; \frac{1}{8} s_k^L\,g^*p_k(i_1^*\bar{f}^*(\nu_{N'_1}))[N'_1] 
\end{equation}
  
with $s_k^L$ the coefficient of $p_k$ in the Hirzebruch class $L_k$. Now , we unpack $s^L_K, t_{4k}$ and do a $L.H.S = R.H.S$ computation to get $g^*p_k(i_1^*\bar{f}^*(\nu_{N'}))[N_1'] = l(c(u))$, which imply, $g^*p_k(i_1^*\bar{f}^*(\nu_{N'})) = l(c(u)) \cdot \omega_{N_1'}$, where $\omega_{N_1'}$ is the preferred generator. This imply $i_1^*p_k(\bar{f}^*(\nu_{N'}))=l(c(u)) \cdot \omega$ and similarly, $i_2^*p_k(\bar{f}^*(\nu_{N'}))=l(c(v)) \cdot \omega$. Which means, $p_k(\bar{f}^*\nu_{N'})=l(c(u)) \cdot \omega_1 \; + \; l(c(v)) \cdot \omega_2 $ . Which leads to the following equation:
\begin{equation}
    p_k(N') \;=\; l(c(u))\,f'^*\omega_1 + l(c(v))\,f'^*\omega_2
\end{equation}

This gives $ 2\,l(1)^2\,c(u)\,c(v) = p_k^2(N') = p_k^2([N'^\circ, \partial N'^\circ]).$ Since $N' \simeq S^{4k}\times S^{4k}$, the intersection form on $N'$ is hyperbolic and $\sigma (N'^\circ) = \sigma(N') = 0$ (homotopy invariance of the form), and the cup-product computation (\cite[eq. 13]{wall-2n}; \eqref{eq:pk2-W}) applies the same to $N'$.  Thus for $N'^\circ := N' \setminus \mathring{D}^{8k}$ ,
\begin{equation}
     \tau(N'^\circ) = 0 = \tau(W),
  \qquad
  \chi^2(N'^\circ) =\frac{2\,l(1)^2\,c(u)\,c(v)}{a_K^2 \cdot ((2k-1)!)^2} = \chi^2(W)
\end{equation}
 
and, $\partial N'^\circ = S^{8k-1}$ being standard, Wall's theorem gives, \ $\partial W \cong S^{8k-1}$.  Conversely, if $\partial W \cong S^{8k-1}$ the construction below produces a homotopy equivalence $W \cup_\phi D^{8k} \xrightarrow{\simeq} S^{4k}\times S^{4k}$ realizing $(u,v)$, whence $d_k \mid c(u)c(v)$ by argument in \textbf{(B)} below.

Since $d_k \mid c(u)\,c(v)$, we have $\partial W_{c(u),c(v)} \cong S^{8k-1}$ by the above.  Choose a diffeomorphism $\phi\colon \partial W \xrightarrow{\cong} S^{8k-1} = \partial D^{8k}$ and define
\begin{equation}
  N_{c(u),\,c(v),\,\phi} \;:=\; W_{c(u),\,c(v)} \;\cup_\phi\; (-D^{8k}).
\end{equation}
Then $N$ is a closed smooth oriented $8k$-manifold.  
\medskip 

Now we construct the homotopy equivalence from $N_{c(u),\,c(v),\,\phi} \to S^{4k} \times S^{4k}$. $D(\xi_{l(c(u))})$ and $D(\xi_{l(c(v))})$ are both fiber homotopy equivalent to $S^{4k} \times D^{4k}$ because their sphere bundle is fiber homotopy trivial; call these fiber homotopy equivalences $f'_{c(u)},f'_{c(v)}$. Now we do plumbing on both sides and get $f'_{c(u),c(v)}\colon W_{c(u),c(v)} \to (S^{4k} \times S^{4k}) \setminus D^{8k}$. Both sides have boundary a standard sphere, so we cone it off by $D^{8k}$ and get
\begin{equation}
  f_{(c(u),c(v))}\colon N_{c(u),\,c(v),\,\phi} \xrightarrow{\simeq} S^{4k} \times S^{4k}.
\end{equation}

\medskip
\noindent \textbf{(B) The normal invariant.}
For simplicity we call the above map $f$ and domain manifold $N$, write $\bar{f}:S_1^{4k}\times S_2^{4k} \to N$ for a homotopy inverse of $f\colon N \to S_1^{4k}\times S_2^{4k}$. We restrict everything on $S_1^{4k}$ and let the normal map be $(g,b):(X,\tau_X) \to (S^{4k}_1,i_1^*\bar{f}^*(\tau_N))$. Then,
\begin{equation}\label{eq:Fi1-signature}
\begin{aligned}
     F(i_1^*\eta(f)) &= 1/8 \times (sgn(X))\\
     &= 1/8 \times L_k(p_1(X), \cdots , p_k(X))[X] \\
     &= 1/8 \times (Coeff(p_k)\  in \ L_k) \times p_k(X)[X] \\
     &=1/8 \times (Coeff(p_k)\  in \ L_k) \times g^*i_1^*\bar{f}^*p_k(\tau_N)[X]
\end{aligned}
\end{equation}
Now, $p_k(\tau_N) = p_k(\tau_W)= l(c(u))\,x^* + l(c(v))\,y^*$ (\S\ref{sec:complement}, equation~\eqref{eq:mu-pk}), and by construction of $\bar{f}$, $i_1^*\bar{f}^*p_k(\tau_N) = l(c(u))\,\omega$. And so, $g^*i_1^*\bar{f}^*p_k(\tau_N) = l(c(u)) \omega_X $ (as $g$ is degree 1), where $\omega$ and $\omega_X$ are preferred generator in cohomology.  substituting in \eqref{eq:Fi1-signature}, $F(i_1^*\eta(f)) = c(u)\,t_{4k}$; by symmetry, $F(i_2^*\eta(f)) = c(v)\,t_{4k}$.
\end{proof}


\medskip

\begin{corollary}\label{cor:type3-realisation}
For every pair $(u,v) \in [S^{4k}, G/O]^{\oplus 2}$ with $c(u), c(v) \neq 0$ and $d_k \mid c(u)\,c(v)$, there is a smooth homotopy equivalence $f:N \xrightarrow{\simeq} S^{4k}\times S^{4k}$ realising the pair, i.e.\ with $i^*\eta(f) = (u,v)$.
\end{corollary}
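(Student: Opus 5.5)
Proposition~\ref{prop:complement-inverses} gives a homotopy equivalence $f=f_{(c(u),c(v))}\colon N\to S^{4k}\times S^{4k}$ whose restricted normal invariant $i^*\eta(f)=(u',v')$ satisfies $F(u')=F(u)$ and $F(v')=F(v)$. By the splitting $\pi_{4k}(G/O)\cong \operatorname{Im}([S^{4k},SG])\oplus\mathbb{Z}\cdot e_0$, this means $u-u'$ and $v-v'$ both lie in $\ker F=\operatorname{Im}([S^{4k},SG])$. So $f$ is only off by a Type~1 discrepancy. The plan is to correct it by precomposing with a pinch map from Proposition~\ref{prop:sg-inverses}.

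Concretely, I would consider $f\circ g$ for a homotopy self-equivalence $g$ of $N$, or equivalently $P\circ f$ with $P=P_{(a,b)}$ a self-equivalence of $S^{4k}\times S^{4k}$. The cleaner route is $P_{(a,b)}\circ f$, using the composition formula \cite[Cor.~2.6]{mtw} in its normal-invariant form
\[
\eta(P\circ f)=\eta(P)+(P^{-1})^*\eta(f).
\]
As in part (B) of the proof of Proposition~\ref{prop:sg-inverses}, $P$ is the identity on the $4k$-skeleton $S^{4k}\vee S^{4k}$. Hence $(P^{-1})^*$ acts trivially after applying $j^*$, and
\[
j^*\eta(P\circ f)=(a,b)+(u',v').
\]
Choosing $(a,b)=(u-u',v-v')\in \operatorname{Im}([S^{4k},SG])^{\oplus2}$, which Proposition~\ref{prop:sg-inverses} realises, gives $j^*\eta(P\circ f)=(u,v)$. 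Finally, $P\circ f\colon N\to S^{4k}\times S^{4k}$ is again a smooth homotopy equivalence from the same manifold $N$, which proves the corollary.

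The main obstacle is justifying the additivity formula for general normal invariants in $[M,G/O]$, since \cite{mtw} and Proposition~\ref{prop:sg-inverses} use it for tangential invariants. The issue is that $\eta(P)$ lies in the image of $[M,SG]\to[M,G/O]$, and one must check that adding it through the $H$-space structure agrees with the group structure on the $j^*$-summands used in the triplet decomposition $(u,v,w)$. I would handle this by noting two facts. First, $j^*$ lands in $[S^{4k}\vee S^{4k},G/O]=\pi_{4k}(G/O)^{\oplus2}$, where the $H$-space sum and the homotopy group sum coincide. Second, the Whitney-sum and composition $H$-structures on $G/O$ agree on homotopy groups. If this proves delicate, a fallback is to avoid composition altogether: realise $(u,v)$ directly by Theorem~\ref{thm:cases} and \cite[Thm.~1.5]{crowley-sss}, since $\partial(u,v)$ depends only on $c(u)c(v)$. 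Proposition~\ref{prop:complement-inverses} then serves only to identify the manifold up to almost diffeomorphism, via $P\circ f$.
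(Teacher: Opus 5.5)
Your proposal is correct and follows the paper's proof essentially step for step. You take the plumbing equivalence $f$ of Proposition~\ref{prop:complement-inverses}, note that $(u,v)-i^*\eta(f)$ lies in $(\ker F)^{\oplus 2}=\operatorname{Im}([S^{4k},SG])^{\oplus 2}$, and realise this difference by a pinch map $P$ from Proposition~\ref{prop:sg-inverses}; the composition formula, together with the fact that $P$ fixes the $4k$-skeleton, then gives $i^*\eta(P\circ f)=(u,v)$. Your extra observation is also worth keeping: on $[S^{4k}\vee S^{4k},G/O]=\pi_{4k}(G/O)^{\oplus 2}$, every $H$-space structure induces the ordinary group law, which is a clarification the paper leaves implicit.
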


\begin{proof}
Let $f\colon N \xrightarrow{\simeq} S^{4k}\times S^{4k}$ be the plumbing homotopy equivalence of Proposition~\ref{prop:complement-inverses}, so that $F(i_1^*\eta(f)) = c(u)\,t_{4k} = F(u)$ and $F(i_2^*\eta(f)) = c(v)\,t_{4k} = F(v)$.  Then $F$ kills each component of $(u,v) - i^*\eta(f)$, so
\begin{equation}
  (u,v) - i^*\eta(f) \in  \operatorname{Im}\bigl(\tau_*\colon [S^{4k}, SG]^{\oplus 2} \to [S^{4k}, G/O]^{\oplus 2}\bigr),
\end{equation}
say $(u,v) - i^*\eta(f) = (\tau_* x, \tau_* y)$ with $\tau\colon SG \to G/O$.  By Proposition~\ref{prop:sg-inverses} there is a pinch map $P_{(x,y)}$ with $i^*\eta(P_{(x,y)}) = (\tau_* x, \tau_* y)$.  Since $P_{(x,y)}$ is the identity on the lower skeleton, the composition formula \cite[Cor.~2.6]{mtw} makes $(P_{(x,y)}^{-1})^*$ the identity on $j^*[\,\cdot\,, G/O]$, hence
\begin{equation}
  i^*\eta\bigl(P_{(x,y)} \circ f\bigr) \;=\; i^*\eta(P_{(x,y)}) + i^*\eta(f) \;=\; (\tau_* x, \tau_* y) + i^*\eta(f) \;=\; (u,v).
\end{equation}
Thus $P_{(x,y)} \circ f\colon N \xrightarrow{\simeq} S^{4k}\times S^{4k}$ realises $(u,v)$. And, this settles the realisation of type 3 case.
\end{proof}

\subsection{Inverses of type 2 via plumbing}\label{sec:remaining}

\begin{proposition}\label{prop:mixed-inverses}
Let $(u,v) \in [S^{4k}, G/O]^{\oplus 2}$ be a type-2 pair, i.e.\ exactly one of $c(u), c(v)$ is zero.  Then there is a smooth homotopy equivalence $f: N \xrightarrow{\simeq} S^{4k}\times S^{4k}$ realising $(u,v)$. Here, $f=f_{c(u),0}  \ \text{or} \ f_{0, c(v)} $ and $N=N_{c(u),0}  \ \text{or} \ N_{0, c(v)} $.
\end{proposition}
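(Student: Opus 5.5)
We follow the same strategy as in Proposition~\ref{prop:complement-inverses} and Corollary~\ref{cor:type3-realisation}, now with one of the two bundles trivial. By symmetry we assume $c(u)\neq 0$ and $c(v)=0$. Every type-2 pair is realisable, since $\partial(u,v)=8c(u)c(v)t_{4k}^2=0$; what we need is an explicit geometric inverse. Form the Milnor plumbing
\begin{equation*}
W_{c(u),0}:=D(\xi_{l(c(u))})\,\natural\, D(\xi_{0}),
\end{equation*}
where $\xi_0=\xi_{l(0)}$ is the trivial rank-$4k$ bundle, so $D(\xi_0)=S^{4k}\times D^{4k}$. As before, $W$ is $(4k-1)$-connected with $H_{4k}(W)=\mathbb{Z}^2\langle x,y\rangle$ and hyperbolic intersection form, so $\sigma(W)=0$. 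Its Pontryagin class is $p_k(W)=l(c(u))\,x^*$, because $p_k$ restricted to the $y$-sphere vanishes.

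\textbf{Step 1: the boundary is standard.} We use Wall's invariants $(\tau,\chi^2)$ exactly as in the proof of Proposition~\ref{prop:complement-inverses}. The signature is $\tau(W)=0$. The cup-product computation there gives
\begin{equation*}
p_k^2[W,\partial W]=2\,l(c(u))\cdot 0=0,
\end{equation*}
hence $\chi^2(W)=0$. These are also the invariants of $(S^{4k}\times S^{4k})^\circ$, whose boundary is standard, so Wall's theorem yields $\partial W\cong S^{8k-1}$. A more direct argument is also available: $D(\xi_0)$ is a trivial bundle, and the plumbing of any disk bundle with $S^{4k}\times D^{4k}$ along the hyperbolic form is the boundary connected sum complement. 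Concretely, $W$ is diffeomorphic to $(S(\xi_{l(c(u))})\times I)$ with a handle attached, and $\partial W$ can be identified with a sphere by noting that $W\cup D^{8k}$ is the manifold obtained by surgery on the fibre sphere of the $S^{4k}$-bundle $S(\xi_{l(c(u))}\oplus\epsilon)$. I would present the Wall-invariant argument as the main proof and mention the direct one as a remark. We then set $N_{c(u),0}:=W\cup_\phi(-D^{8k})$.

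\textbf{Step 2: the homotopy equivalence and its normal invariant.} By Lemma~\ref{thm:main}, $\xi_{l(c(u))}$ is fibre-homotopy trivial. We plumb the fibre homotopy equivalence $D(\xi_{l(c(u))})\to S^{4k}\times D^{4k}$ with the identity of $S^{4k}\times D^{4k}$. This gives a map $W\to(S^{4k}\times S^{4k})^\circ$ which restricts to a homotopy equivalence of boundary spheres, and coning off produces $f_{c(u),0}\colon N_{c(u),0}\to S^{4k}\times S^{4k}$. The signature computation \eqref{eq:Fi1-signature} then gives $F(i_1^*\eta(f))=c(u)\,t_{4k}$. On the second sphere, $f$ restricted over $S^{4k}_2$ is covered by the identity of the trivial piece, so $i_2^*\eta(f)=0$ and in particular $F(i_2^*\eta(f))=0$.

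\textbf{Step 3: correcting to $(u,v)$.} As in Corollary~\ref{cor:type3-realisation}, the difference $(u,v)-i^*\eta(f)$ lies in $\ker F\oplus\ker F=\operatorname{Im}[S^{4k},SG]^{\oplus2}$. Proposition~\ref{prop:sg-inverses} supplies a pinch map $P_{(x,y)}$ realising this difference. Since $P_{(x,y)}$ is the identity on the $4k$-skeleton, the composition formula gives $i^*\eta(P_{(x,y)}\circ f)=(u,v)$. Composition with $P_{(x,y)}$ does not change the domain manifold, so the resulting manifold is still $N_{c(u),0}$, as the statement requires.

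The main obstacle is Step 1: checking that Wall's invariant $\chi^2$ really vanishes when one summand is trivial, that is, that $\chi^2$ is the quadratic quantity $p_k^2$ computed through the hyperbolic form and not some other normalisation. If that step is doubtful, the fallback is the direct identification of $\partial W$ sketched in Step 1.
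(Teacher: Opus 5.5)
Your proposal is correct and follows the paper's route. The paper's proof consists only of forming $W=D(\xi_0)\natural D(\xi_{l(c)})$ and noting that $\tau=\chi^2=0$, as for $(S^{4k}\times S^{4k})^\circ$, so that $\partial W\cong S^{8k-1}$; your Steps~2 and~3 (the plumbed fibre homotopy equivalence, the signature computation, and the pinch-map correction) are left implicit there via Proposition~\ref{prop:complement-inverses} and Corollary~\ref{cor:type3-realisation}. Your optional ``direct'' remark is garbled as written, but the Wall-invariant argument does not need it: $\chi$ is a multiple of $p_k$ and $(x^*)^2=0$. Correctly stated, $W$ is $D(\xi_{l(c(u))})$, not $S(\xi)\times I$, with a $4k$-handle attached along a fibre sphere, so $\partial W$ is the result of surgery on a fibre of $S(\xi_{l(c(u))})$ and $W\cup D^{8k}$ is essentially $S(\xi_{l(c(u))}\oplus\epsilon)$ itself.
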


\begin{proof}
This is the similar case of \S\ref{sec:complement} with one reduced coefficient zero.  Say $c(u) = 0$ and $c(v) \neq 0$ (the other case is symmetric).  By Theorem~\ref{thm:cases} the pair is realisable, since $\partial(u,v) = 0$ automatically.

We do the plumbing of \S\ref{sec:complement} with the pair $(0, c(v))$.  Here $l(0) = 0$, so $\xi_{l(0)} = \xi_0$ is the trivial bundle $S^{4k}\times \mathbb{R}^{4k}$, and
\begin{equation}
  W \;=\; D(\xi_0) \;\natural\; D\bigl(\xi_{l(c(v))}\bigr).
\end{equation}
Note that, both $\chi^2$ and $\tau$ is zero just the same as $S^{4k} \times S^{4k} \setminus int(D)$.
So $\partial W \cong S^{8k-1}$ always.
\end{proof}

\medskip
\begin{proof}[\textbf{proof of Theorem 3.1}]
    Let's call $N_{c(u),\,c(v),\,\phi} = X_{(u,v)}$ and notice that, $X_{(0,0)} = S^{4k} \times S^{4k}$. Proposition \ref{prop:sg-inverses},\ref{prop:complement-inverses},\ref{prop:mixed-inverses}, corollary \ref{cor:type3-realisation} together exhibit, for every realisable pair $(u,v)$, a smooth homotopy equivalence $f_{(u,v)}$ with $i^*\eta(f_{(u,v)}) = (u,v)$.  By \S\ref{sec:setup} the $w$-component is then determined up to $\ker F_{8k}$, and two realisations of the same $(u,v)$ differ by connect-sum with some $\Sigma \in \Theta_{8k}$, hence the domains are almost diffeomorphic. So, for any $(u,v,w) \in Im(\eta)$, we have an inverse in the structure set whose domain is almost diffeomorphic to some $X_{(u,v)}$ This completes the proof of Theorem 1. Theorem~\ref{thm:classification}.
\end{proof} 
\begin{remark}[The structure Set]
    These elements of three types in the structure set also determines the whole structure set upto $\Theta_{8k}$ action. The collection $\{ [A_{c(u),c(v),x,y}  :\ X_{u,v} \xrightarrow{f_{c(u),c(v)}} S^{4k} \times S^{4k} \xrightarrow{P_{(x,y)}} S^{4k} \times S^{4k}] \ | \ x,y \in \pi_{8k}(4k); \ c(u), c(v) \in Z \ and \ d_k \mid c(u)c(v) \}$ exhausts $hS(S^{4k} \times S^{4k})/\Theta_{8k}$. And, two elements, $[A_{c(u),c(v),x,y}]$ and $[A_{c(u)',c(v)',x',y'}]$ in that collection is same iff, $c(u)=c(u)', c(v)=c(v)', \tau^*\Sigma^\infty(x) = \tau^*\Sigma^\infty(x'), \tau^*\Sigma^\infty(y) = \tau^*\Sigma^\infty(y')$. This is a consequence of their normal invariants.     
\end{remark}

\section{Smooth manifolds homotopy equivalent to $S^{4k}\times S^{4k+1}$}

We classify smooth closed manifolds homotopy equivalent to $X = S^{4k}\times S^{4k+1}$, $k \geq 2$, up to almost diffeomorphism.  In the smooth surgery exact sequence
\begin{equation}
L_{8k+2}(\mathbb{Z}) \xrightarrow{\;\partial\;} hS(X) \xrightarrow{\;\eta\;} [X, G/O] \xrightarrow{\;\sigma\;} L_{8k+1}(\mathbb{Z}) = 0,
\end{equation}
So, here $\eta$ surjective and $\operatorname{Im}(\eta) = [X, G/O]$;  We have to realise each element of $[X, G/O]$.  The normal invariant of a homotopy equivalence $f\colon N \xrightarrow{\simeq} X$ decomposes into three pieces, each realised by a different construction:  the tangential part (the image of $[X, SG] \to [X, G/O]$ on the lower skeleton) by compositions of pinch maps;  the rank-one non-tangential $\mathbb{Z}$-summand of $[S^{4k}, G/O]$ by Novikov's sphere-bundle construction;  and the top-cell component $c^*[S^{8k+1}, G/O]$ by connected sum with a homotopy $(8k+1)$-sphere.  Main result of this section is the following:

\begin{theorem}[Classification]\label{thm:classification-odd}
Every smooth closed manifold $N$ homotopy equivalent to $S^{4k}\times S^{4k+1}$, $k \geq 2$, is almost diffeomorphic to $S(m\eta) \mathbin{\#} \Sigma'$ for some $m \in \mathbb{Z}$ and $\Sigma' \in \Theta_{8k+1}$, where $S(m\eta)$ is the total space of the sphere bundle of $m\eta \in \ker J \subset \pi_{4k}(BSO(4k+2))$, where $\eta$ is the generator of the kernel.  

\end{theorem}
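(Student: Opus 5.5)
The plan follows the strategy announced at the start of this section. Since $L_{8k+1}(\mathbb{Z})=0$, the map $\eta$ is onto $[X,G/O]$. So it suffices to produce, for each class in $[X,G/O]$, a homotopy equivalence realising it, and then to check that the domains are all of the form $S(m\eta)\#\Sigma'$ up to almost diffeomorphism. We split $[X,G/O]$ using the cofibration $S^{4k}\vee S^{4k+1}\to X\to S^{8k+1}$. This cofibration splits after one suspension, hence on the infinite loop space $G/O$, so
\begin{equation*}
[X,G/O]\cong[S^{4k},G/O]\oplus[S^{4k+1},G/O]\oplus[S^{8k+1},G/O].
\end{equation*}
Using the splitting $\pi_{4k}(G/O)\cong \operatorname{Im}([S^{4k},SG])\oplus\mathbb{Z} e_0$ from \S\ref{sec:setup}, we refine this further. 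Note that $\pi_{4k+1}(G/O)\cong\pi_{4k+1}(G/\mathrm{Top})$ modulo $\operatorname{Im}\pi_{4k+1}(SG)$ is entirely in the image of $[S^{4k+1},SG]$. This holds because $\pi_{4k+1}(G/\mathrm{Top})=L_{4k+1}=0$ and $\mathrm{Top}/O\to G/O$ hits it through $\Theta_{4k+1}$, whose normal invariants come from $SG$ modulo $bP$.

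\emph{Step 1 (tangential part).} Take pinch maps $P(\iota\circ y)$ with $y\in\pi_{8k}(S^{4k+1})$, and $P(\ell\circ x)$ with $x\in\pi_{8k}(S^{4k})$. Part (A) of Proposition~\ref{prop:sg-inverses} computes their restricted tangential normal invariants as $\{y\}$ and $\{x\}$, and that argument transfers verbatim. Surjectivity of stabilisation onto $\pi^S_{4k}$ and $\pi^S_{4k-1}$ follows from the EHP sequence; this is where $k\ge 2$ keeps us in a range where the unstable groups surject. Combined with the composition formula of \cite[Cor.~2.6]{mtw}, these pinch maps realise every element of $j^*\operatorname{Im}[X,SG]$. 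All these maps are self-equivalences of $X$, so their domain is $X=S(0)$.

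\emph{Step 2 (the $\mathbb{Z}e_0$ summand).} We use Novikov's construction. Elements $m\eta\in\ker(J\colon\pi_{4k}(BSO(4k+2))\to\pi_{4k}(BSG(4k+2)))$, where $\ker J\cong\mathbb{Z}$ unstably in this range, give sphere bundles $S(m\eta)\to S^{4k}$ with fibre $S^{4k+1}$. Each comes with a fibre homotopy trivialisation $f_m\colon S(m\eta)\to X$. Exactly as in the proof of Theorem~\ref{thm:main-result}(ii), Crowley's Lemma~5.2 identifies $\eta(f_m)$ with $\pi^*\gamma_m$. The Hirzebruch computation then gives $F(\gamma_m)=\pm m\,t_{4k}$, provided the generator of $\ker J$ stabilises to $\operatorname{den}(B_k/4k)$ times the generator of $\pi_{4k-1}(SO)$. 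This identification is the \textbf{main obstacle}. To prove it, I would compare $\ker(\pi_{4k-1}SO(4k+1)\to\pi_{4k-1}SG(4k+1))$ with the stable kernel, using the stability of $SO(4k+1)\to SO$ and of $SG(4k+1)\to SG$ in degree $4k-1$. That comparison should show that the unstable kernel maps isomorphically onto the stable kernel, which is $\operatorname{den}(B_k/4k)\mathbb{Z}$ by Adams \cite{adams-jiv}. Composing $f_m$ with the pinch maps of Step 1 then realises every lower-skeleton class $(u,v)$. As in Corollary~\ref{cor:type3-realisation}, the correction lies in $\operatorname{Im}[\,\cdot\,,SG]$ and pinch maps act additively on $j^*$.

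\emph{Step 3 (top cell and conclusion).} For a fixed lower-skeleton class, the remaining freedom lies in $c^*[S^{8k+1},G/O]$. Given a structure $f\colon N\to X$, the sum $f\#\Sigma'\colon N\#\Sigma'\to X$ changes $\eta$ by $c^*\eta(\Sigma')$, and $\eta\colon\Theta_{8k+1}\to\pi_{8k+1}(G/O)$ is onto because $L_{8k+1}=0$. Hence every class in $[X,G/O]$ is realised by some $S(m\eta)\#\Sigma'$. Now let $N\simeq X$ be arbitrary. Some such model $N_0=S(m\eta)\#\Sigma'$ has $\eta(N_0)=\eta(N)$. By exactness, $N$ and $N_0$ then differ by the action of $L_{8k+2}(\mathbb{Z})=\mathbb{Z}/2$. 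That action is realised by connected sum with the Kervaire sphere in $bP_{8k+2}$, so $N$ is diffeomorphic to $N_0\#\Sigma_K$, which is again of the stated form.
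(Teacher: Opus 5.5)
Your overall route is the paper's. You use surjectivity of $\eta$ from $L_{8k+1}(\mathbb{Z})=0$, pinch maps for the tangential lower-skeleton part, and Novikov sphere bundles $S(m\eta)$ with the Hirzebruch computation of Theorem~\ref{thm:main-result}(ii) for the $\mathbb{Z}e_0$ summand. You then use $\#\Sigma'$ for the top cell and conclude via the $L_{8k+2}(\mathbb{Z})$-action. Your explicit description of that action as connected sum with the $bP_{8k+2}$-sphere is what the paper's ``by Novikov'' amounts to. Your Step~2 ``main obstacle'' is routine. The relevant group is $\pi_{4k}(BSO(4k+2))=\pi_{4k-1}(SO(4k+2))$ (rank $4k+2$, not $4k+1$). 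Both it and $\pi_{4k-1}(SG(4k+2))$ lie in the stable range, so the unstable kernel of $J$ is Adams' stable kernel $\operatorname{den}(B_k/4k)\mathbb{Z}$.

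The step that fails as written is Step~1. Since $\dim X=8k+1$, a pinch map $X\to X\vee S^{8k+1}\to X$ needs $\alpha\in\pi_{8k+1}(X)$. So you must take $y\in\pi_{8k+1}(S^{4k})$ and $x\in\pi_{8k+1}(S^{4k+1})$, exactly as in Proposition~\ref{prop:tangential}. Elements of $\pi_{8k}(\cdot)$, the degree from the $S^{4k}\times S^{4k}$ case, do not define pinch maps on $X$. Correspondingly, the summands to be hit are $[S^{4k},SG]=\pi^S_{4k}$ and $[S^{4k+1},SG]=\pi^S_{4k+1}$, not $\pi^S_{4k-1}$. With your degrees, $[S^{4k+1},SG]$ is never reached. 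Yet you need all of it, since you yourself reduce $\pi_{4k+1}(G/O)$ to the image of $SG$.

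After the correction, the surjectivity of $\pi_{8k+1}(S^{4k})\to\pi^S_{4k+1}$ lies two stages outside the Freudenthal range. It therefore needs an actual EHP computation, not just an appeal to $k\ge 2$. Two-locally, the cokernel of the first suspension is detected by $H$ in $\pi_{8k+2}(S^{8k+1})\cong\mathbb{Z}/2$. It vanishes because $P(\eta_{8k+1})=[\iota_{4k},\eta_{4k}]\neq 0$. The cokernel of the second suspension is generated by $[\iota_{4k+2},\iota_{4k+2}]$, which dies stably. (At odd primes the double suspension is already onto in this range.) With this repair your argument coincides with the paper's.
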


\subsection{Computation of $\operatorname{Im}(\eta)$}\label{sec:setup-odd}

The CW structure $X \simeq (S^{4k}\vee S^{4k+1}) \cup_\rho e^{8k+1}$ gives a split short exact sequence
\begin{equation}\label{eq:split}
0 \to [S^{8k+1}, G/O] \xrightarrow{c^*} [X, G/O] \xrightarrow{i^*} [S^{4k}, G/O] \oplus [S^{4k+1}, G/O] \to 0
\end{equation}
  $F$ splits $[S^{4k}, G/O] = Im ([S^{4k}, SG)] \oplus \mathbb{Z}e_0$, with $F(e_0) = \pm t_{4k} = \pm |bP_{4k}|$. Notice that, $[S^{4k}, G/O] \oplus [S^{4k+1}, G/O] = Im([S^{4k}, SG] \oplus [S^{4k+1}, SG]) \oplus Z.e_0$.  Every $x \in [X, G/O]$ therefore decomposes at the $i^*$-level as
\begin{equation}\label{eq:istar-decomp}
i^*(x) \;=\; (a, b) \;+\; (me_0, 0) \;\in\; [S^{4k}, G/O] \oplus [S^{4k+1}, G/O],
\end{equation}
with $(a, b) \in Im([S^{4k}, SG] \oplus [S^{4k+1}, SG])$ and $m \in \mathbb{Z}$;  the remaining $c^*$-component lies in $[S^{8k+1}, G/O]$.

\subsection{Tangential inverses}\label{sec:tangential}

\begin{proposition}\label{prop:tangential}
Every element of $Im([S^{4k}, SG] \oplus [S^{4k+1}, SG])$ is realised as $\tau_*\, i^*\eta^t(P_g)$ for a composition of pinch maps $P_g = P_{s\circ\beta} \circ P_{\ell\circ\alpha}$ with $\alpha \in \pi_{8k+1}(S^{4k})$, $\beta \in \pi_{8k+1}(S^{4k+1})$.
\end{proposition}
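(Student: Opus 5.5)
The plan is to imitate the proof of Proposition~\ref{prop:sg-inverses} for $M=S^{4k}\times S^{4k+1}$, whose lower skeleton is $S^{4k}\vee S^{4k+1}$ and whose top cell is $e^{8k+1}$. Let $\ell\colon S^{4k}\hookrightarrow M$ and $s\colon S^{4k+1}\hookrightarrow M$ be the factor inclusions. For $\alpha\in\pi_{8k+1}(S^{4k})$ and $\beta\in\pi_{8k+1}(S^{4k+1})$ we form the pinch maps $P_{\ell\circ\alpha}$ and $P_{s\circ\beta}$. Each is obtained by collapsing a small disk in the top cell and mapping the extra sphere $S^{8k+1}$ in by $\ell\circ\alpha$ or $s\circ\beta$. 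By \cite[Secs.~5,~7]{crowley-hambleton} these are tangential homotopy equivalences, so their normal invariants lie in $\operatorname{Im}(\tau_*\colon [M,SG]\to[M,G/O])$.

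\textbf{Step 1: computing the restriction.} We apply \cite[Lem.~7.4]{crowley-hambleton} to write $\eta^t(P_{\ell\circ\alpha})$ as the composite of three maps: the Pontryagin--Thom collapse $\ell^!\colon M\to \mathrm{Th}(\nu_\ell)$, the dual of the Thom-space map covering $\alpha$, and the collapse to $S^N$.
\begin{itemize}
\item The normal bundle of $\ell(S^{4k})=S^{4k}\times\{*\}$ is trivial of rank $4k+1$. Hence $\mathrm{Th}(\nu_\ell)=M/(S^{4k}\times D^{4k+1}_-)$.
\item The same exact-sequence argument as before shows that $\ell^!$ kills the $S^{4k}$ summand on homology and is an isomorphism onto the $S^{4k+1}$ summand.
\item Using the stable splittings of $S^{8k+1}_+\wedge S^N$ and $S^{4k}_+\wedge S^N$ exactly as in the ladder diagram of Proposition~\ref{prop:sg-inverses}, we obtain $i^*\tilde\eta^t(P_{\ell\circ\alpha})=(0,\pm\{\alpha\})\in \pi^S_{4k}\oplus\pi^S_{4k+1}$, where $\{\alpha\}$ is the stable class of $\alpha$ (of stem $4k+1$).
\item Symmetrically, $i^*\tilde\eta^t(P_{s\circ\beta})=(\pm\{\beta\},0)$, with $\{\beta\}$ of stem $4k$.
\end{itemize}
The degree bookkeeping needs care: the pinch along the $4k$-sphere detects the complementary $(4k+1)$-sphere, and the stem of $\alpha$ is exactly $(8k+1)-4k=4k+1$. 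This matches $[S^{4k+1},SG]=\pi^S_{4k+1}$, and likewise $\beta$ lands in $\pi^S_{4k}=[S^{4k},SG]$.

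\textbf{Step 2: surjectivity.} We need the stabilisations $\pi_{8k+1}(S^{4k})\to\pi^S_{4k+1}$ and $\pi_{8k+1}(S^{4k+1})\to\pi^S_{4k}$ to be onto. The second map is already in the stable range. The first lies one step below the stable range. Its surjectivity follows from the EHP sequence: the next term is $\pi_{8k+2}(S^{4k+1})\xrightarrow{H}\pi_{8k+2}(S^{8k+1})=\mathbb{Z}/2$. So the cokernel of $E$ injects into $\mathbb{Z}/2$ through the Hopf invariant, and it vanishes since Hopf invariant one does not occur in dimension $4k+1\neq 1,3,7$. This is where $k\ge2$ (indeed $k\ge1$) enters.

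\textbf{Step 3: combining and pushing forward.} For the composite $P_g=P_{s\circ\beta}\circ P_{\ell\circ\alpha}$ we use the composition formula \cite[Cor.~2.6]{mtw}. Since pinch maps are the identity on $S^{4k}\vee S^{4k+1}$, the pullback term acts trivially after restriction by $i^*$, so $i^*\eta^t(P_g)=(\pm\{\beta\},\pm\{\alpha\})$. Letting $\alpha,\beta$ range over all classes exhausts $[S^{4k},SG]\oplus[S^{4k+1},SG]$, and applying $\tau_*$ gives the claim.

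I expect the main obstacle to be Step 1. There the identification of $D(\mathrm{Th}(b_\alpha))$ with the stable class of $\alpha$, up to sign, must be checked with the shifted dimensions. Specifically, the duality dimension is now $8k+1+2N$, and the Thom-space splitting must be verified to put the contribution on the complementary sphere rather than on the sphere being pinched.
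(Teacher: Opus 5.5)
Your route is the same as the paper's: the paper's proof only asserts that the pinch-map/umkehr computation of Proposition~\ref{prop:sg-inverses} carries over, using the $V_{4k+2,2}$ case with the same lower skeleton. You carry that transfer out explicitly, with correct degree bookkeeping in Step~1 and the same composition-formula argument in Step~3.

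\textbf{The gap is in Step~2.} You need $\pi_{8k+1}(S^{4k})\to\pi^S_{4k+1}$ to be onto; the paper leaves this implicit. The stem is $4k+1$, so there are \emph{two} unstable suspensions before Freudenthal applies, not one, and your justification is attached to the wrong one.

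\emph{First suspension.} For $E\colon\pi_{8k+1}(S^{4k})\to\pi_{8k+2}(S^{4k+1})$ the cokernel is $\ker\bigl(P\colon\pi_{8k+2}(S^{8k+1})\cong\mathbb{Z}/2\to\pi_{8k}(S^{4k})\bigr)$. What kills it is
\[
P(\eta_{8k+1})=[\iota_{4k},\iota_{4k}]\circ\eta_{8k-1}=[\iota_{4k},\eta_{4k}]\neq0,
\]
which holds because $[\iota_n,\eta_n]$ vanishes only for $n\equiv3\pmod 4$ or $n=2,6$. Adams' Hopf-invariant-one theorem concerns the integral invariant on $\pi_{2m-1}(S^m)$, not this $\mathbb{Z}/2$-valued $H$.

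\emph{Second suspension.} The map $E\colon\pi_{8k+2}(S^{4k+1})\to\pi_{8k+3}(S^{4k+2})$ is not onto at all: its source is finite and its target has a $\mathbb{Z}$ summand. This is where Hopf invariant one genuinely enters. Since $[\iota_{4k+1},\iota_{4k+1}]\neq0$ (as $4k+1\neq1,3,7$), the image of $H\colon\pi_{8k+3}(S^{4k+2})\to\mathbb{Z}$ is $2\mathbb{Z}$, generated by $H[\iota_{4k+2},\iota_{4k+2}]=\pm2$. Hence $\pi_{8k+3}(S^{4k+2})=\operatorname{Im}E+\mathbb{Z}\,[\iota_{4k+2},\iota_{4k+2}]$. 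The Whitehead square dies under the next suspension, which is onto by Freudenthal.

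With both facts the composite is onto $\pi^S_{4k+1}$, and your Step~3 then goes through.

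\textbf{A smaller point in Step~1.} For $P_{s\circ\beta}$, the restriction of $s^!$ to $\{*\}\times S^{4k+1}$ is a map $S^{4k+1}\to\mathrm{Th}(\nu_s)\simeq S^{4k}\vee S^{8k+1}$. Here $\pi_{4k+1}(S^{4k})\cong\mathbb{Z}/2$ (stably $\pi^S_1$) is invisible to homology, so ``the same exact-sequence argument'' does not show this restriction is null. Argue geometrically instead: push $\{*\}\times S^{4k+1}$ off the tubular neighbourhood $D^{4k}_+\times S^{4k+1}$ into the collapsed region. Even a nonzero $\eta$ there would only add $\{\beta\}\circ\eta$ to the $S^{4k+1}$-coordinate, which you could absorb by the choice of $\alpha$. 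So this point is not fatal, but it needs saying.
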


\begin{proof}
By \cite[Thm.~3.3]{biswas-vs}:  $V_{4k+2, 2}$ has the same lower skeleton $S^{4k}\vee S^{4k+1}$ and a stably-trivial top-cell attaching map, so the Spanier--Whitehead/umkehr computation of the pinch-map normal invariants there applies verbatim, realising classes $\alpha \in \pi_{8k+1}(S^{4k})$, $\beta \in \pi_{8k+1}(S^{4k+1})$ on the two factors. proceeding in a same way we prove the proposition.
\end{proof}

\subsection{Non-tangential inverses via the Novikov sphere bundle}\label{sec:novikov}

Let, $X=S^{4k} \times S^{4k+1}$ and $\eta\colon S^{4k} \to BSO(4k+2)$ generate $\ker J$ in $\pi_{4k}(BSO(4k+2)) \cong \mathbb{Z}$, of index $j_k = \mathrm{denom}(B_k/4k)$, with Pontryagin class $p_k(\eta) = a_k j_k (2k-1)!$.  A fibre-homotopy trivialisation $h\colon J\eta \simeq J\epsilon^{4k+2}$ induces a homotopy equivalence $S(h)\colon S(\eta) \xrightarrow{\simeq} X$ of sphere bundles \cite[Ex.~13.26]{ranicki-ags}.

\begin{proposition}\label{prop:higher-multiples}
For each $m \in \mathbb{Z}$, the $m$-fold sum $m\eta \in \pi_{4k}(BSO(4k+2))$ admits a fibre-homotopy trivialisation, inducing a homotopy equivalence $S(mh)\colon S(m\eta) \xrightarrow{\simeq} X$ with
\begin{equation}
i^*\eta(S(mh)) \;=\; (me_0,\, 0) \;\in\; [S^{4k}, G/O] \oplus [S^{4k+1}, G/O],
\end{equation}
where $e_0$ is the generator of the non-tangential $\mathbb{Z}$-summand from \eqref{eq:istar-decomp}.
\end{proposition}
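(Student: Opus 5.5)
Since $\ker J$ is a subgroup, $m\eta$ lies in $\ker J$ for every $m$. Concretely, $J(m\eta)=mJ(\eta)=0$ in $\pi_{4k}(BSG(4k+2))$. For $k\ge2$ we are in the stable range: $4k<4k+1$, so $\pi_{4k}(BSO(4k+2))\cong\pi_{4k}(BSO)$ and $\pi_{4k}(BSG(4k+2))\cong\pi_{4k}(BSG)$. Hence the spherical fibration of $m\eta$ is fibre-homotopy trivial. We choose a trivialisation $mh$ and obtain, exactly as for $\eta$ \cite[Ex.~13.26]{ranicki-ags}, a homotopy equivalence $S(mh)\colon S(m\eta)\to X$ of sphere bundles over $S^{4k}$. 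The case $m=0$ is trivial, since then $S(0)=X$ and we may take $S(mh)=\mathrm{Id}$.

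The normal invariant is computed as in the proof of Theorem~\ref{thm:main-result}(ii). By Crowley's lemma \cite[Lem.~5.2]{crowley-escher}, the normal invariant of a fibre-homotopy equivalence of sphere bundles over $S^{4k}$ is pulled back along the projection $\pi_X\colon X\to S^{4k}$:
\begin{equation}
\eta(S(mh))=\pi_X^*\gamma_m ,
\end{equation}
where $\gamma_m\in[S^{4k},G/O]$ is the class of the pair $(m\eta,mh)$. Because $\pi_X\circ i_1=\mathrm{Id}$ and $\pi_X\circ i_2$ is null-homotopic, we get $i^*\eta(S(mh))=(\gamma_m,0)$. It remains to show $\gamma_m=me_0$. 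The assignment $(\xi,h)\mapsto[\xi,h]\in\pi_{4k}(G/O)$ is a homomorphism from the group of fibre-homotopy-trivialised stable bundles (Whitney sum on both entries) to $\pi_{4k}(G/O)$. With compatible choices of trivialisation this gives $\gamma_m=m\gamma_1$.

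To identify $\gamma_1$ with $e_0$ we apply $F$. The Hirzebruch computation of Theorem~\ref{thm:main-result}(ii) goes through verbatim, because $p_k(\eta)=a_kj_k(2k-1)!=l(1)$. It gives $F(\gamma_1)=\pm t_{4k}$, so $\gamma_1$ projects to $\pm e_0$ in the splitting \eqref{eq:istar-decomp}. Replacing $\eta$ by $-\eta$ if necessary fixes the sign.

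The main obstacle is the indeterminacy in the choice of fibre-homotopy trivialisation. Different choices of $h$ differ by an element of $\pi_{4k}(SG)$, which changes $\gamma_1$ by an element of $\operatorname{Im}([S^{4k},SG])$. So $\gamma_1$ is only determined as $e_0$ plus a tangential class. I would handle this in one of two ways. The first is to choose $e_0:=\gamma_1$, which is legitimate because $e_0$ was only required to satisfy $F(e_0)=t_{4k}$. The second is to precompose $h$ with the inverse self-equivalence of $\epsilon^{4k+2}$, which cancels the tangential part. With either choice fixed, $mh$ is defined as the $m$-fold fibrewise join/sum of $h$, and additivity yields exactly $i^*\eta(S(mh))=(me_0,0)$.
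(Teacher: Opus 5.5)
Your proposal is correct and follows the same route as the paper. You use Ranicki's Example~13.26 for the homotopy equivalence, Crowley's Lemma~5.2 to get $\eta(S(mh))=\pi_X^*\gamma_m$ (so the $S^{4k+1}$-component vanishes), and the Hirzebruch computation of Theorem~\ref{thm:main-result}(ii) with $p_k(\eta)=l(1)$ to get $F(\gamma_m)=\pm m\,t_{4k}$; your treatment of additivity and of the $\operatorname{Im}([S^{4k},SG])$-indeterminacy of the trivialisation supplies a step the paper's two-line sketch leaves implicit. That indeterminacy is removed either by setting $e_0:=\gamma_1$ or by re-choosing $h$, and the latter works because $\pi_{4k}(SG(4k+2))\to\pi_{4k}(SG)$ is onto at the edge of the stable range.
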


\begin{proof}
This is the $m$-fold version of Ranicki's Example~13.26 \cite[Ex.~13.26]{ranicki-ags}. We proceed just as Proof of Theorem 2.1(ii) to show, $\eta(S(m \eta)) \in Im(p^*)$, where p is the second projection of $X$.
\end{proof}

\subsection{Proof of Main Theorem}\label{sec:assembly}

For $(a, b) \in [S^{4k}, G/O] \oplus [S^{4k+1}, G/O]$ with $a = a_{\mathrm{tang}} + me_0$ as in \eqref{eq:istar-decomp}, put $\Phi = P_g \circ S(mh)\colon S(m\eta) \to X$, with $P_g$ realising $(a_{\mathrm{tang}}, b)$ (Proposition~\ref{prop:tangential}) and $S(m \eta)$ realising $(me_0,0)$.  Since $P_g$ is the identity on the lower skeleton, Madsen's composition formula \cite[Cor.~2.6]{mtw} gives
\begin{equation}
i^*\eta(\Phi) \;=\; i^*\eta(P_g) + i^*\eta(S(mh)) \;=\; (a_{\mathrm{tang}}, b) + (m e_0, 0) \;=\; (a, b)
\end{equation}
So ,we can realize any $(a, b) \in [S^{4k}, G/O] \oplus [S^{4k+1}, G/O]$ by these composite homotopy equivalences $\Phi$.

\begin{proposition}\label{prop:topcell}
For every $x \in [X, G/O]$ there exist a pinch map $P_g$, an integer $m \in \mathbb{Z}$, and a homotopy sphere $\Sigma' \in \Theta_{8k+1}$ such that
\begin{equation}\label{eq:fullPhi}
\Psi\colon S(m\eta) \mathbin{\#} \Sigma' \;\xrightarrow{h_{\Sigma'}}\; S(m\eta) \;\xrightarrow{S(mh)}\; X \;\xrightarrow{P_g}\; X
\end{equation}
satisfies $\eta(\Psi) = x$.
\end{proposition}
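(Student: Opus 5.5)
The plan is to begin with the lower-skeleton part of $x$ and deal with the top cell last. Using the split short exact sequence \eqref{eq:split}, write $i^*(x) = (a,b)$ and decompose it as in \eqref{eq:istar-decomp}, so that $a = a_{\mathrm{tang}} + me_0$. By Proposition~\ref{prop:higher-multiples} and Proposition~\ref{prop:tangential}, and by the computation carried out just above with Madsen's composition formula \cite[Cor.~2.6]{mtw}, the composite $\Phi = P_g\circ S(mh)$ satisfies $i^*\eta(\Phi) = (a,b)$. Exactness of \eqref{eq:split} then gives
\begin{equation}
x - \eta(\Phi) \;=\; c^*(z) \qquad\text{for a unique } z \in [S^{8k+1}, G/O] = \pi_{8k+1}(G/O).
\end{equation}
The task is therefore to correct the top-cell component by precomposing with the standard homotopy equivalence $h_{\Sigma'}\colon S(m\eta)\#\Sigma' \to S(m\eta)$. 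This map is the identity away from a disk, and on that disk it is the collapse $\Sigma'\to S^{8k+1}$.

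There are two things to establish. First, $\eta(h_{\Sigma'}) = c_N^*\,\eta_{S^{8k+1}}(\Sigma')$, where $c_N\colon S(m\eta)\to S^{8k+1}$ is the collapse onto the top cell and $\eta_{S^{8k+1}}\colon \Theta_{8k+1} \to \pi_{8k+1}(G/O)$ is the normal invariant of the homotopy sphere. This is standard: the normal invariant of a connected-sum homotopy equivalence is pulled back from the top cell. Second, we need the composition formula $\eta(\Phi\circ h_{\Sigma'}) = \eta(\Phi) + (\Phi^{-1})^*\eta(h_{\Sigma'})$. Since $\Phi$ is a homotopy equivalence of degree $\pm1$, it commutes with the top-cell collapses up to sign, so $(\Phi^{-1})^*c_N^* = \pm c^*$. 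Orientations are fixed so that this sign is $+$, or else we replace $\Sigma'$ by $-\Sigma'$.

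It then remains to show that every $z \in \pi_{8k+1}(G/O)$ lies in the image of $\eta_{S^{8k+1}}$. Surjectivity comes from the surgery exact sequence of $S^{8k+1}$:
\begin{equation}
\Theta_{8k+1} \xrightarrow{\;\eta\;} \pi_{8k+1}(G/O) \xrightarrow{\;\sigma\;} L_{8k+1}(\mathbb{Z}) = 0 .
\end{equation}
Equivalently, by Kervaire--Milnor, $\Theta_{8k+1}/bP_{8k+2} \cong \operatorname{coker} J_{8k+1}$, which maps onto $\pi_{8k+1}(G/O)$ because that group is the image of $\pi_{8k+1}(SG)\oplus$ (the $\pi_{8k+1}(G/\mathrm{Top})=0$ part). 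So we pick $\Sigma'$ with $\eta(\Sigma') = z$.

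The step I expect to be the main obstacle is the bookkeeping in the composition formula. The correction term is $(\Phi^{-1})^*c_N^*(z)$, not $c^*(z)$ itself, and in principle it could interact with $\eta(\Phi)$ through the non-additive H-space structure on $G/O$; Madsen's formula uses the Whitney-sum structure. I would handle this first by noting that $c^*[S^{8k+1},G/O]$ is central and that $c^*$ is injective in \eqref{eq:split}. The difference of the two composites then lies in $c^*$ of a difference of top-cell invariants, and the additivity of $\eta$ on $\Theta_{8k+1}$ forces the $H$-space structure discrepancy to vanish there. If that fails, the fallback is to absorb any discrepancy by re-choosing $\Sigma'$, which is possible because the map $\Theta_{8k+1}\to\pi_{8k+1}(G/O)$ is surjective.
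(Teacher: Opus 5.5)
Your proposal is correct and follows the same route as the paper: realise $i^*x$ by $\Phi=P_g\circ S(mh)$, observe that the residual $x-\eta(\Phi)$ lies in $\operatorname{Im}c^*$, and kill it by precomposing with $h_{\Sigma'}$. You also supply details the paper leaves implicit. These are the surjectivity of $\Theta_{8k+1}\to\pi_{8k+1}(G/O)$ (from $L_{8k+1}(\mathbb{Z})=0$), the formula $\eta(h_{\Sigma'})=c_N^*\eta(\Sigma')$, and $c_N\circ\Phi^{-1}\simeq\pm c$ by a degree argument.

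Your worry about the $H$-space structure is unnecessary. The Puppe sequence already makes the fibres of $i^*$ equal to cosets of $\operatorname{Im}c^*$ for the Whitney-sum structure. If you use that structure throughout, as the composition formula requires, there is no discrepancy to absorb.
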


\begin{proof}
Choose $\Phi$ such that $i^*\eta(\Phi)=i^*(x)$, the residual $x - \eta(\Phi) = c^*(z) \in \operatorname{Im}(c^*)$ is realised by connected sum with a homotopy sphere $\Sigma' \in \Theta_{8k+1}$;  then $\eta(\Psi) = \eta(\Phi \circ h_{\Sigma'}) = \eta(\Phi) + c^*(z) = x$.
\end{proof}

\begin{proof}[\textbf{Proof of Theorem}~\ref{thm:classification-odd}]
Given a homotopy equivalence $q\colon N \to X$, set $x = \eta(q)$ and apply Proposition~\ref{prop:topcell} to obtain $\Psi$ with $\eta(\Psi) = x$.  By Novikov~\cite{novikov}, $N$ is almost diffeomorphic to the source of $\Psi$, namely $S(m\eta) \mathbin{\#} \Sigma'$, and hence also, $S(m\eta)$.
\end{proof}
\begin{remark}[The Structure Set]
    The following collection exhaust $hS(S^{4k} \times S^{4k+1})/\Theta_{8k+1}$: 
    $$ \{[P_{(x,y)} \circ S(mh)\colon S(m\eta) \to X] \ | \ x \in \pi_{8k+1}(S^{4k}), y \in \pi_{8k+1}(S^{4k+1}) \ and \ m \in Z\}$$
    And, $[P_{(x,y)} \circ S(mh)] = [P_{(x',y')} \circ S(m'h)]$ iff $m=m' \ and \ \tau^*\Sigma^\infty(x)=\tau^*\Sigma^\infty(x'), \tau^*\Sigma^\infty(y)=\tau^*\Sigma^\infty(y')$. This is a consequence of their normal invariant.~\cite[Lemma: 3.1; Lemma : 3.2]{biswas-vs} 
    
\end{remark}

\noindent Sagnik Biswas\\
Indian Institute of Technology, Madras\\

\noindent Email: ma20d013@smail.iitm.ac.in

\end{document}